\numberwithin{equation}{section}
\renewcommand\vec{\bm}
\newcommand{\n}[1]{\|{#1}\|}
\newtheorem{theorem}{Theorem}[section]
\newtheorem{lemma}[theorem]{Lemma}
\newtheorem{Proposition}[theorem]{Proposition}
\newtheorem{Conjecture}[theorem]{Conjecture}
\newtheorem{Corollary}[theorem]{Corollary}
\begin{document}

\begin{frontmatter}[classification=text]


\author[am]{Akshat Mudgal\thanks{Supported by Ben Green's Simons Investigator Grant, ID 376201}}

\begin{abstract}
Let $d \geq 4$ be a natural number and let $A$ be a finite, non-empty subset of $\mathbb{R}^d$ such that $A$ is not contained in a translate of a hyperplane. In this setting, we show that
\[ |A-A| \geq \bigg(2d - 2 + \frac{1}{d-1} \bigg) |A| - O_{d}(|A|^{1- \delta}), \]
for some absolute constant $\delta>0$ that only depends on $d$. This provides a sharp main term, consequently answering questions of Ruzsa and Stanchescu up to an $O_{d}(|A|^{1- \delta})$ error term. We also prove new lower bounds for restricted type difference sets and asymmetric sumsets in $\mathbb{R}^d$.
\end{abstract}
\end{frontmatter}

\section{Introduction}

In this paper, we study a classical problem in additive combinatorics concerning lower bounds for cardinalities of difference sets in higher dimensions. Given a natural number $d$ and finite, non-empty sets $A$ and $B$ of $\mathbb{R}^d$, we define the sumset $A+B$ and the difference set $A-B$ as
\[ A+ B = \{ a + b \ | \ a \in A, \ b \in B \}\]
and
\[A- B = \{ a- b \ | \ a \in A, \ b \in B\}. \]
We also define the \emph{dimension} of a non-empty set $A \subseteq \mathbb{R}^d$, denoted by $\dim(A)$, to be the dimension of the affine subspace spanned by $A$. If $\dim(A) = k$ for some $k \in \mathbb{N}$, we say that $A$ is a $k$-\emph{dimensional set}. 
\par

The study of higher dimensional sumsets has been a major topic of research in additive combinatorics. For instance, we note a classical result of Freiman, popularly known as Freiman's lemma (see \cite[Lemma $5.13$]{TV2006}), that states that whenever $A$ is a finite, non-empty subset of $\mathbb{R}^d$ satisfying $\dim(A) =d$, we have
\begin{equation} \label{i1frl}
 |A+A| \geq (d+1)|A| - d(d+1)/2. 
 \end{equation}
Furthermore, this can be seen to be sharp by noting that the set
\begin{equation} \label{trin1}
 A_N = \{0, e_1, \dots, e_{d-1}\} + \{n \cdot e_d \ | \ 1  \leq n \leq N\}
 \end{equation}
satisfies 
\[ |A_N + A_N| \leq (d+1)|A_N| - O_{d}(1), \]
where we use $\{e_1, \dots, e_d\}$ to denote the canonical basis of $\mathbb{R}^d$ and we write $\lambda \cdot \vec{v}= (\lambda v_1, \dots, \lambda v_d)$ for any $\lambda \in \mathbb{R}$ and $\vec{v} = (v_1, \dots, v_d) \in \mathbb{R}^d$. 
\par

While sharp estimates for cardinalities of sumsets of the form $A+A$ have therefore been well known, there have been no known corresponding results for $A-A$ that have been sharp in dimensions $d \geq 4$. Questions concerning such lower bounds have been asked by various authors, including Ruzsa \cite{Ru1994} and Stanchescu \cite{St2001}. Moreover, estimates of this form have been applied to improve results in geometry of numbers, including the classical theorem of Minkowski-Blichfeld (see \cite{Ak2021} for references). Till recently, the only known lower bound for $|A-A|$, when $A$ is chosen to be a $d$-dimensional set for some arbitrary $d \in \mathbb{N}$, was given by work of Freiman--Heppes--Uhrin \cite{FHU1989}, who showed that
\begin{equation} \label{inm}
  |A-A| \geq (d+1)|A| - d(d+1)/2. 
  \end{equation}
While this can be seen as an analogue of $\eqref{i1frl}$ for difference sets and it is, in fact, optimal when $d \in \{1,2\}$, this was not known to be sharp for $d \geq 3$, since even the set $A_N$, that optimised $\eqref{i1frl}$, satisfies $|A_N - A_N| = (2d - 2 + 2/d)|A_N| - O_{d}(1)$ for every $d \in \mathbb{N}$.
\par

The only other regime where a sharp lower bound is known to hold for this problem is the case when $d=3$, where Stanchescu \cite{St1998} proved that whenever $\dim(A) = 3$, we have $|A-A| \geq (4 + 1/2)|A| - 9$, which answered a question of Ruzsa \cite{Ru1994}. Furthermore, this can be seen to optimal from later work of Stanchescu \cite{St2001}, where for each $d \geq 3$ it was shown that there exist $d$-dimensional sets $B_i \subseteq \mathbb{R}^d$, with $|B_i| \to \infty$ as $i \to \infty$, such that
\begin{equation} \label{con78}
 |B_i - B_i| \leq (2d - 2 + 1/(d-1))|B_i| - (2d^2 - 4d + 3). 
 \end{equation}
Stanchescu \cite{St2001} further conjectured that this was the extremal example for every $d \geq 4$.

\begin{Conjecture} \label{con45}
Let $A$ be a finite, non-empty subset of $\mathbb{R}^d$ such that $\dim(A) = d$. Then
\[ |A-A| \geq (2d - 2 + 1/(d-1))|A| - (2d^2 - 4d + 3). \]
\end{Conjecture}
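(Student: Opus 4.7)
My approach would be induction on the dimension $d$, taking as base case $d=3$ the sharp bound of Stanchescu \cite{St1998}. For the inductive step, I would project a given $d$-dimensional set $A \subseteq \mathbb{R}^d$ along a direction $\vec{v} \in \mathbb{R}^d$ onto a hyperplane, obtaining a $(d-1)$-dimensional image $A' = \pi(A) \subseteq \mathbb{R}^{d-1}$ together with fibers $A_{a'} = A \cap \pi^{-1}(a')$ lying on parallel lines indexed by $a' \in A'$. The plan is to choose $\vec{v}$ adapted to the structure of $A$, for instance along the longest arithmetic progression contained in $A$ (or so that $A'$ avoids further degeneracies), with the goal of making the fibers as substantial as possible.

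The central estimate I would rely on is the fiber-projection lower bound
\[ |A - A| \;\geq\; \sum_{c \in A' - A'} |F_c|, \qquad F_c = \bigcup_{a'_1 - a'_2 = c} (A_{a'_1} - A_{a'_2}), \]
where each $F_c$ lies on a single line parallel to $\vec{v}$, so that for any representation $c = a'_1 - a'_2$ one has $|F_c| \geq |A_{a'_1}| + |A_{a'_2}| - 1$. Applying the inductive hypothesis to $A' \subseteq \mathbb{R}^{d-1}$ gives $|A' - A'| \geq (2d - 4 + 1/(d-2))|A'| - (2d^2 - 8d + 9)$, which I would combine with additional contributions extracted from pairs $(a'_1, a'_2)$ involving a maximizing fiber $a'_0 \in A'$ of size $M = \max_{a'} s_{a'}$, where $s_{a'} = |A_{a'}|$.

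To recover the sharp coefficient $2d - 2 + 1/(d-1)$, a case analysis based on the relative sizes of $M$ and $|A|$ is needed. If $M$ is large relative to $|A|/(d-1)$, then $A$ concentrates on a bounded number of parallel lines and the problem reduces, after a secondary projection, to a $1$-dimensional sumset bound. If $M$ is small, then $|A'|$ is close to $|A|$ and the inductive hypothesis already supplies most of the needed increment; the remainder must come from the fact that $A$ is not the graph of an affine function over $A'$ (otherwise $\dim(A) < d$). Matching these regimes so that the sharp coefficient emerges in all intermediate cases, calibrated against Stanchescu's extremal example $B_i$ from \cite{St2001}, is the algebraic core of the argument.

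The principal obstacle, and what I expect separates the conjecture from the main theorem of this paper, is pinning down the exact additive constant $-(2d^2 - 4d + 3)$ in place of an error of the form $O_d(|A|^{1-\delta})$. This demands a sharp inverse analysis at every step: whenever the inductive bound on $|A' - A'|$ and each fiber-contribution estimate are simultaneously close to being tight, one must force $A$ into the structure of Stanchescu's extremal example. Small or degenerate cases (e.g.\ $|A|$ small compared to $d$, or projections with $\dim(A') < d - 1$) would need separate treatment via \eqref{inm} or a bespoke combinatorial argument, with thresholds matched so that no regime forces a weaker constant. The stable or approximate versions of these inverse facts are presumably what the weaker error term in the paper's actual theorem is buying.
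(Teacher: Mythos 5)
The statement you are attempting is labelled as a \emph{Conjecture} in the paper (Conjecture~\ref{con45}, due to Stanchescu), and the paper does not prove it. What the paper establishes is Theorem~\ref{diff}, which is exactly this inequality but with the explicit constant $-(2d^2 - 4d + 3)$ replaced by a much weaker $-O_d(|A|^{1-\delta})$ error term. So there is no proof of this statement in the paper against which to check your argument; your own closing remarks correctly identify this, and your sketch does not close the gap either.

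That said, it is worth comparing your plan with what the paper actually does toward Theorem~\ref{diff}. You propose projecting $A$ along a chosen direction to get $A' \subseteq \mathbb{R}^{d-1}$, applying the inductive hypothesis to $A'$, and accounting for fiber contributions. This is the right overall shape, but two pieces of machinery that the paper uses are missing from your sketch and are precisely what make its accounting close. First, the paper does not induct directly: it first invokes a Freiman-type structure result (Lemma~\ref{fri}) to pass, at the cost of an $O_d(|A|^{1-\sigma})$ error, to sets supported on few parallel lines, and only then runs a double induction on $(d, r)$ where $r$ is the number of lines. Without this reduction, the choice of projection direction has no reason to be favourable, and your ``choose $\vec v$ adapted to the longest AP'' heuristic is not enough to control things. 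Second, the heart of the paper's inductive step is a sharpened asymmetric Ruzsa inequality (Theorem~\ref{mn2}), proved via Gardner--Gronchi-style compressions and the Grynkiewicz--Serra bound, which gives a $|B|$-coefficient strictly exceeding $d$ whenever the two sets project onto many fibers. Your fiber-sum estimate $|A-A| \ge \sum_c |F_c|$, together with $|F_c| \ge |A_{a'_1}| + |A_{a'_2}| - 1$ for a single chosen representation $c = a'_1 - a'_2$, does not by itself produce the gain of $1/(d-1)$ over the $(2d-2)$-type bound; you have to explain where that extra increment comes from when the maximal fiber is neither very large nor very small, and the paper's answer is Theorem~\ref{mn2} plus the convex-hull-facet decomposition in \S4.

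Finally, even with all of this machinery in place, no one --- neither this paper nor Conlon--Lim, who get a sharp $O_d(1)$ error for large $|A|$ --- obtains the exact constant $-(2d^2 - 4d + 3)$, so your identification of the explicit constant as the genuine obstacle is accurate. As a proposal for the conjecture, however, the argument is a stub: the inverse analysis at each stage that you acknowledge would be needed to pin down the constant is exactly the part that is missing and that nobody currently knows how to do.
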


The best known lower bound for this problem when $d \geq 4$ was recently upgraded by one of our previous results \cite{Ak2021}, where we showed that there exists a constant $\delta >0$, depending on $d$ only, such that
\[ |A-A| \geq (2d-2)|A| - O_{d}(|A|^{1-\delta}), \]
for every $d$-dimensional subset $A$ of $\mathbb{R}^d$. This improved upon the aforementioned result of Freiman--Heppes--Uhrin \cite{FHU1989} and delivered an estimate for $|A-A|/|A|$ that was within $1/(d-1)$ of the main term given by Conjecture $\ref{con45}$.
\par

In this paper, we record a further improvement in this direction, obtaining the bound of Conjecture $\ref{con45}$ up to an $O_{d}(|A|^{1- \delta})$ error term.

\begin{theorem} \label{diff}
Let $d \geq 4$ and let $A$ be a finite, non-empty subset of $\mathbb{R}^d$ such that $\dim(A) = d$. Then we have
\[ |A-A| \geq \bigg(2d - 2 + \frac{1}{d-1}\bigg) |A| - O_d(|A|^{1- \delta}), \]
for some absolute constant $\delta >0$ that only depends on $d$.
\end{theorem}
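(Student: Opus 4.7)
My plan is to induct on $d \geq 4$, using Stanchescu's sharp $d=3$ theorem as the base case and the $(2d-2)|A|$ lower bound from the author's previous work as the starting platform. Given $A \subseteq \mathbb{R}^d$ with $\dim(A) = d$, I would choose a direction $\vec{v}$ so that the projection $\pi:\mathbb{R}^d \to \mathbb{R}^{d-1}$ along $\vec{v}$ sends $A$ to a set $A' = \pi(A)$ of full dimension $d-1$. Writing $A_y = \pi^{-1}(y) \cap A$, one gets $A = \bigsqcup_{y \in A'} A_y$, with each fiber contained in a translate of the line $\mathbb{R}\vec{v}$. The inductive hypothesis applied to $A'$ yields
\[ |A' - A'| \geq \Bigl(2(d-1) - 2 + \tfrac{1}{d-2}\Bigr)|A'| - O_d(|A'|^{1-\delta}), \]
while on each pair of fibers we have the trivial one-dimensional bound $|A_{y_1} - A_{y_2}| \geq |A_{y_1}| + |A_{y_2}| - 1$.

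The next task is to convert these inputs into the target bound on $|A - A|$. The difference set fibers over $A' - A'$, with the slice above $z \in A' - A'$ containing $A_{y_1} - A_{y_2}$ for every pair $(y_1, y_2)$ with $y_1 - y_2 = z$. I would build a graph-theoretic or matching structure on $A'$ that selects, for each $z \in A' - A'$, a representative pair maximising $|A_{y_1}| + |A_{y_2}|$, and then sum the fiber contributions to transfer the mass of the $|A_y|$ into a lower bound on $|A - A|$. Cases where no generic projection achieves image dimension $d-1$ force $A$ to lie in a strip, that is, a cylinder over a lower-dimensional set plus a bounded number of outliers, and are dispatched by a direct argument.

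The principal obstacle is extracting the exact constant $2d - 2 + 1/(d-1)$ rather than merely $2d-2$. This increment of $1/(d-1)$ matches precisely the slack in the Stanchescu extremal configuration, so I anticipate needing a stability dichotomy: when the $(2d-2)|A|$ bound is near-saturated, $A$ must be forced into a rigid structure resembling a pair of parallel $(d-1)$-dimensional simplices, and one reads off the sharp constant by direct computation in this regime; otherwise, the slack from the inductive bound on $|A'-A'|$ combined with the fiber estimate already absorbs the extra $|A|/(d-1)$. Executing this dichotomy robustly, while preserving the polynomial error term $O_d(|A|^{1-\delta})$ uniformly through the induction (and in particular ensuring that the rigid case analysis does not blow up the error), is the main technical challenge.
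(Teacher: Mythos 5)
Your high-level skeleton (induct on dimension from Stanchescu's $d=3$ case, fiber $A$ over a projection, and run a stability dichotomy around the $(2d-2)|A|$ threshold) points in roughly the right direction, but the two steps you leave unspecified are precisely where all the difficulty lives, and as stated they do not go through. First, the "matching structure" step: with only the one-dimensional bound $|A_{y_1}-A_{y_2}|\geq |A_{y_1}|+|A_{y_2}|-1$ per fiber pair, summing over representatives $z\in A'-A'$ requires each heavy fiber to be selected as a representative roughly $2d-2+1/(d-1)$ times, and there is no reason a greedy maximisation achieves this — this type of accounting is what yields the Freiman--Heppes--Uhrin bound $(d+1)|A|$, and already reaching $(2d-2)|A|$ required more. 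The paper instead first reduces to the case where $A$ is (up to $O_d(|A|^{1-\sigma})$ outliers) covered by $r=O_d(|A|^{1-\sigma})$ parallel lines each containing $\gg |A|^{\sigma}$ points, via the Ruzsa triangle inequality $|A+A|\leq |A-A|^3|A|^{-2}$ combined with a Freiman-type covering lemma (Lemma \ref{fri}); your proposal has no analogue of this reduction (your degenerate-projection case is a different, much milder issue), and without it the fiber sizes are uncontrolled. Second, the gain of $1/(d-1)$ does not come from reading off a rigid "pair of parallel simplices": the extremal configurations are $2d-2$ parallel \emph{lines} lying in two parallel hyperplanes. The paper's mechanism is a dichotomy in an induction on the number of covering lines $r$: either $A$ projects onto more than $2d-2$ points in a suitable direction, in which case a refined asymmetric sumset inequality (Theorem \ref{mn2}, proved by compressions, giving a coefficient $d+1-\tfrac{1}{r_1-d+2}-\tfrac{1}{r_2-c+2}$ for $|B|$) yields a strictly better multiplicative constant; or $r\leq 2d-2$, in which case the within-line differences $(A\cap l_1)-(A\cap l_1)\subseteq L_{\vec v}$ are disjoint from the $(2d-2)|A|$ differences counted off the line $L_{\vec v}$ (Theorem \ref{diffln}) and contribute the extra $2|A|/r\geq |A|/(d-1)$. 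Neither of these two ingredients — the refined projection-sensitive sumset inequality, nor the separation of $(A-A)\cap L_{\vec v}$ from $(A-A)\setminus L_{\vec v}$ — appears in your plan, and "direct computation in the rigid regime" is not a substitute, since even for sets exactly covered by $2d-2$ lines one needs the inductive lower bound on $|(A-A)\setminus L_{\vec v}|$.
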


When $d \geq 4$, this improves upon our result in \cite{Ak2021} and answers the questions of Ruzsa \cite{Ru1994} and Stanchescu \cite{St2001} concerning cardinalities of difference sets, up to an $O_d(|A|^{1 - \delta})$ error term. Moreover, these lower bounds are sharp up to the $O_d(|A|^{1 - \delta})$ error term, as Stanchescu's example $\eqref{con78}$ shows. 

Our proof of Theorem $\ref{diff}$ entails proving various other results concerning cardinalities of difference sets, the foremost of these being an estimate about lower bounds for restricted type difference sets. We present this below.

%

\begin{theorem} \label{diffln}
Let $d \geq 1$ and let $A$ be a finite, non-empty subset of $\mathbb{R}^d$ such that $\dim(A) = d$. Moreover, suppose that $A$ is supported on $r$ translates of the line $L_{\vec{v}} = \{ \lambda \cdot \vec{v} \ | \ \lambda \in \mathbb{R}\}$ for some $\vec{v} \in \mathbb{R}^d \setminus \{0\}$. Then we have
\[ |(A-A)\setminus L_{\vec{v}}| \geq (2d - 2) |A| - 2d^2 r. \]
\end{theorem}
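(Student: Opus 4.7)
The plan is to project $A$ along the direction $\vec v$ to obtain $B := \pi(A) \subseteq \mathbb{R}^{d-1}$ via the quotient map $\pi:\mathbb{R}^d \to \mathbb{R}^d / L_{\vec v}$. One has $|B| = r$, $\dim B = d-1$, and writing $A_b := A \cap \pi^{-1}(b)$ for $b \in B$ gives $|A| = \sum_b |A_b|$. Every difference in $(A-A) \setminus L_{\vec v}$ projects to a non-zero element of $B - B$, so
\[
|(A - A) \setminus L_{\vec v}| = \sum_{y \in (B-B) \setminus \{0\}} |(A - A) \cap \pi^{-1}(y)|,
\]
and for each non-zero $y \in B - B$ and any $(b,b') \in B \times B$ with $b - b' = y$, the one-dimensional bound gives $|(A-A) \cap \pi^{-1}(y)| \geq |A_b - A_{b'}| \geq |A_b| + |A_{b'}| - 1$. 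The lemma thus reduces to exhibiting a family $\mathcal{P} \subseteq B \times B$ of ordered pairs $(b,b')$ with $b \neq b'$ such that (i) the differences $b - b'$ are pairwise distinct, (ii) each $b \in B$ appears in at least $2(d-1)$ pairs of $\mathcal{P}$, and (iii) $|\mathcal{P}| \leq 2d^2 r$; summing the slice bound over $\mathcal{P}$ then yields $\sum_{(b,b') \in \mathcal{P}}(|A_b| + |A_{b'}| - 1) \geq 2(d-1)|A| - |\mathcal{P}| \geq (2d-2)|A| - 2d^2 r$.

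I would build $\mathcal{P}$ starting from a simplex $\{v_1, \ldots, v_d\} \subseteq B$ of $d$ affinely independent vertices of the convex hull of $B$, which exists since $\dim B = d - 1$. For each $b \in B$ and each $j \in [d]$ with $b \neq v_j$, include both ordered pairs $(b, v_j)$ and $(v_j, b)$ in an initial family $\mathcal{F}$. Each non-simplex $b$ then appears in $\mathcal{F}$ exactly $2d$ times, and each simplex vertex $v_j$ appears $2(r-1) \geq 2(d-1)$ times. Collisions of differences within $\mathcal{F}$ arise only from Schur-type relations $b_1 - b_2 = v_{j_1} - v_{j_2}$ and $b_1 + b_2 = v_{j_1} + v_{j_2}$; for each ordered pair of simplex indices $(j_1, j_2)$, each such identity costs at most $r$ pairs, so the total collision count is $O(d^2 r)$ and after pruning one has $|\mathcal{P}| \leq 2 d r \leq 2d^2 r$.

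The main obstacle is ensuring property (ii) after pruning: in the worst case, collisions could concentrate at a single non-simplex $b$ and drop its count below $2(d-1)$, since each such $b$ only has a buffer of $2$ above the required threshold. My plan is to handle this by a swapping argument that exploits the surplus $2(r-1)$ enjoyed by each simplex vertex: when a non-simplex $b$ would fall below threshold, one re-routes a collision away from $b$ onto a vertex $v_j$, which has ample room. If this direct swapping proves delicate, a cleaner alternative is to induct on $r$: peel off a convex-hull vertex $v$ of $B$, apply the inductive bound to $B' = B \setminus \{v\}$ and $A' = A \setminus A_v$, and gain contributions from the $2(r-1)$ new lines $\pi^{-1}(\pm(v - b))$ with $b \in B'$. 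The delicate step in this alternative is showing that at least $2(d-1)$ of those lines carry differences outside $B' - B'$, which I would verify by choosing $v$ to maximize a linear functional $\phi$ on $B$ and arguing that $\phi$-large differences cannot lie in $B' - B'$.
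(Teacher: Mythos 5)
Your opening reduction is sound: it would indeed suffice to exhibit a family $\mathcal{P}$ of ordered pairs of $B=\pi(A)$ with pairwise distinct differences, of size $O(d^2r)$, in which every $b\in B$ appears at least $2(d-1)$ times. But this balanced system of distinct differences \emph{is} the theorem (it is exactly the statement in the case where every fibre $A_b$ is an arithmetic progression with common difference $\vec{v}$), and neither of your strategies establishes it. In the simplex-plus-pruning route, the obstacle you flag is genuine and the proposed fix is not an argument: a collision class is a fixed list of pairs, and "re-routing onto a vertex" cannot add a pair with the required difference that is not already there. Moreover a non-simplex point $b$ can be forced to the exact threshold (if $2b=v_{j_1}+v_{j_2}$, its $2d$ pairs occupy only $2d-2$ classes), so a single class containing pairs from two such tight points already defeats any local repair; one would need a global Hall-type selection argument, which is not supplied. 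The single-vertex peeling fails concretely. Take $d=3$ and let $B$ be the unit square $\{(0,0),(1,0),(0,1),(1,1)\}$ in $\mathbb{R}^2$ with $v=(1,1)$: then $(v-B')\setminus(B'-B')=\{(1,1)\}$ consists of a single element rather than $d-1=2$, and by symmetry every vertex of the square behaves the same way. The new fibres therefore contribute only about $2\big(|A_v|+|A_{(0,0)}|\big)$, whereas closing the induction requires $2(d-1)|A_v|=4|A_v|$; when the large fibre sits over the peeled vertex this loses a positive proportion of $|A|$, not an $O(d^2)$ amount.

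For comparison, the paper's induction (Proposition 3.1) does not peel a single extreme point but the whole extreme translate $X_2$ of the affine hull $H_1$ of a facet $D_1$ of the convex hull of the projected point set, with $D_1$ chosen to carry the largest fibre. Extremality makes $X_1^{\pi}-X_2^{\pi}$, $X_2^{\pi}-X_1^{\pi}$ and $Y_2^{\pi}-Y_2^{\pi}$ pairwise disjoint, and Ruzsa's inequality (Lemma 2.2) applied to the $(d-1)$-dimensional sumset $X_1^{\pi}-X_2^{\pi}$ delivers the factor $d-1$ on $|X_2^{\pi}|$ in one stroke, with the ordering $|X_1^{\pi}|\ge|X_2^{\pi}|$ guaranteeing that this factor lands on the part being removed. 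That dimension-driven mechanism is precisely what both of your constructions are missing; without it, the crux of the proof remains open.
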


Theorem $\ref{diffln}$ can be interpreted as a quantification of the notion that higher dimensional difference sets can not lie too much in a lower dimensional subspace. We remark that this is sharp as well, up to an $O_{d}(1)$ error term. In order to see this, we return to the sets $A_N$ from $\eqref{trin1}$: each $A_N$ is supported on $d$ translates of $L_{e_d}$ and satisfies the inequality
\[ |(A_N - A_N) \setminus L_{e_d}| \leq d(d-1) (2N -1) =(2d-2) |A_N| - O_{d}(1). \]
\par

In our proofs of Theorems $\ref{diff}$ and $\ref{diffln}$, we are often interested in analysing sumsets of the form $A+B$ where $A$ and $B$ are finite subsets of $\mathbb{R}^d$ such that $\dim(A+B) = d$ and $|A| \geq |B|$. A classical result of Ruzsa \cite{Ru1994} examines such cases and implies that 
\begin{equation} \label{rt21}
 |A+B| \geq |A| + d |B|  - d(d+1)/2.
 \end{equation}
A key ingredient in our method will be a more refined version of this inequality, where we are interested in obtaining a stronger multiplicative factor for $|B|$ in the above bound, conditional on some structural information about $A$ and $B$. In order to present this, we record some further notation: given $\vec{v} \in \mathbb{R}^d \setminus \{0\}$, we write $H_{\vec{v}}$ to be the hyperplane orthogonal to $\vec{v}$, and we denote $\pi_{\vec{v}} : \mathbb{R}^d \to H_{\vec{v}}$ to be the natural projection map.

\begin{theorem} \label{mn2}
Let $r_1, r_2, d\in \mathbb{N}$ satisfy $r_1 \geq d\geq 2$ and let $A,B$ be finite, non-empty subsets of $\mathbb{R}^d$ such that $|A| \geq |B|$ and $\dim(A+B) = d$. Furthermore, let $\vec{v} \in \mathbb{R}^d \setminus \{0\}$ satisfy $|\pi_{\vec{v}}(A)| = r_1$ and $|\pi_{\vec{v}}(B)| = r_2$. Then
\[ |A+B| \geq |A| + \bigg(d+1 - \frac{1}{r_1 -d +2} - \frac{1}{r_2 - c + 2}\bigg)|B| - (d-1)(r_1 + r_2), \]
where $c = d$ whenever $r_2 \geq d$, and $c = \dim(B)$ whenever $r_2 < d$. 
\end{theorem}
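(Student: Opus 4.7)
The plan is to combine a fibre decomposition of $A$ and $B$ along lines parallel to $\vec{v}$ with an application of Ruzsa's inequality \eqref{rt21} to the projected sets in the hyperplane $H_{\vec{v}}$. Write $A = \bigsqcup_{i=1}^{r_1} A_i$ and $B = \bigsqcup_{j=1}^{r_2} B_j$, where $A_i = A \cap (\vec{x}_i + L_{\vec{v}})$ with $\pi_{\vec{v}}(A) = \{\vec{x}_1, \dots, \vec{x}_{r_1}\}$ and analogously for $B$, and set $\alpha_i = |A_i|$, $\beta_j = |B_j|$. The one-dimensional Cauchy--Davenport inequality gives $|A_i + B_j| \geq \alpha_i + \beta_j - 1$; since $A_i + B_j$ is contained in the line $(\vec{x}_i + \vec{y}_j) + L_{\vec{v}}$, distinct values of $\vec{z} = \vec{x}_i + \vec{y}_j$ contribute disjointly to $A+B$, yielding the fundamental bound
\[ |A+B| \ \geq\ \sum_{\vec{z} \in \pi_{\vec{v}}(A)+\pi_{\vec{v}}(B)}\ \max_{(i,j):\, \vec{x}_i + \vec{y}_j = \vec{z}} (\alpha_i + \beta_j - 1). \]

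Next I would apply \eqref{rt21} to $\pi_{\vec{v}}(A)$ and $\pi_{\vec{v}}(B)$ in $H_{\vec{v}} \cong \mathbb{R}^{d-1}$: if $\dim(\pi_{\vec{v}}(A) + \pi_{\vec{v}}(B)) = d-1$, this delivers $|\pi_{\vec{v}}(A) + \pi_{\vec{v}}(B)| \geq \max(r_1,r_2) + (d-1)\min(r_1,r_2) - d(d-1)/2$. When the projected sumset has smaller dimension, the hypothesis $\dim(A+B) = d$ forces some fibres to be nontrivial so as to contribute the missing dimension; this degenerate scenario would be handled separately, and is precisely where the clause $c = \dim(B)$ (in place of $c = d$, when $r_2 < d$) enters, reflecting that $B$ is itself of lower dimension than generically expected and so Ruzsa should be applied in the ambient affine span of $B$ rather than all of $\mathbb{R}^d$.

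The heart of the argument then combines the two ingredients via a greedy matching. Enumerate $\vec{y}_1, \dots, \vec{y}_{r_2}$ in order of decreasing $\beta_j$, and assign each $\vec{z} \in \pi_{\vec{v}}(A)+\pi_{\vec{v}}(B)$ to the smallest $j(\vec{z})$ with $\vec{z} - \vec{y}_{j(\vec{z})} \in \pi_{\vec{v}}(A)$, thereby fixing a representative $(i(\vec{z}), j(\vec{z}))$. Substituting into the displayed bound, the sum splits as $\sum_j \bigl[\sum_{i \in I_j^*} \alpha_i + |N_j^*|(\beta_j - 1)\bigr]$, where $N_j^*$ collects the $\vec{z}$'s first covered at stage $j$ and $I_j^* = \{i : \vec{x}_i + \vec{y}_j \in N_j^*\}$. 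The initial stage already contributes $\sum_{i \in I_1^*} \alpha_i = |A|$ and $|N_1^*|(\beta_1 - 1) = r_1(\beta_1 - 1)$, while $\sum_j |N_j^*| = |\pi_{\vec{v}}(A)+\pi_{\vec{v}}(B)|$ is controlled by the Ruzsa estimate above. A rearrangement/convexity step on the ordered sequences $(\alpha_i)$ and $(\beta_j)$ is then expected to yield the coefficient $d+1 - 1/(r_1-d+2) - 1/(r_2-c+2)$ for $|B|$, with the accumulated $O_d(r_1+r_2)$ losses absorbed into the $-(d-1)(r_1+r_2)$ error term.

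The main obstacle I anticipate is extracting the precise fractions $1/(r_1-d+2)$ and $1/(r_2-c+2)$ from this greedy bookkeeping. This appears to require an inductive or extremal analysis peeling off the largest fibres one at a time, with base cases $r_1 = d$ (where $\pi_{\vec{v}}(A)$ is essentially a $(d-1)$-simplex in $H_{\vec{v}}$) and $r_2 \leq d$ (handled via the $c = \dim(B)$ clause). Equally delicate will be the separate treatment of the degenerate case $\dim(\pi_{\vec{v}}(A)+\pi_{\vec{v}}(B)) < d-1$, and ensuring that the one-dimensional Cauchy--Davenport slack and the projected Ruzsa bound are combined without double-counting, so that the extra $|B|$ improvement beyond Ruzsa's $d|B|$ term is genuinely captured.
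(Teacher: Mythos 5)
There is a genuine gap at the heart of your proposal: the step you describe as ``expected to yield the coefficient $d+1-\tfrac{1}{r_1-d+2}-\tfrac{1}{r_2-c+2}$'' is precisely where the difficulty lies, and the two ingredients you combine (fibrewise Cauchy--Davenport plus Ruzsa's inequality \eqref{rt21} applied to the projections) are quantitatively insufficient to produce it. To see this concretely, suppose all fibres of $A$ have size $\alpha=|A|/r_1$ and all fibres of $B$ have size $\beta=|B|/r_2$. Then your fundamental bound collapses to $(\alpha+\beta-1)\,|\pi_{\vec{v}}(A)+\pi_{\vec{v}}(B)|$ no matter which representative the greedy matching selects, i.e.\ exactly the Grynkiewicz--Serra-type estimate \eqref{gs1}. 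Take $d=4$, $r_1=5$, $r_2=4$, $|A|=|B|=n$: Ruzsa in $H_{\vec{v}}\cong\mathbb{R}^3$ only guarantees $|\pi_{\vec{v}}(A)+\pi_{\vec{v}}(B)|\geq 5+3\cdot 4-6=11$, so your method outputs $(\tfrac{n}{5}+\tfrac{n}{4}-1)\cdot 11=4.95\,n-11$, whereas the theorem demands $n+(5-\tfrac13-\tfrac12)n-O(1)=5.1\overline{6}\,n-O(1)$; the shortfall is a positive constant times $n$ and grows with $d$. Salvaging this would require a strengthened lower bound on $|\pi_{\vec{v}}(A)+\pi_{\vec{v}}(B)|$ beyond Ruzsa, which is a $(d-1)$-dimensional statement of essentially the same depth as the theorem itself, so the argument is circular where it is not incomplete. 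The underlying reason is that your fundamental bound retains, for each $\vec{z}$, only one pair $(i,j)$ with $\vec{x}_i+\vec{y}_j=\vec{z}$, discarding the extra elements contributed by the union $\bigcup_{(i,j)}(A_i+B_j)$ over all representations; it is exactly this loss that the paper avoids by first applying Gardner--Gronchi compressions (Lemmas \ref{gardc}--\ref{tmb}) to reduce to down sets whose projections lie in $\mathbb{Z}^{d-2}\times\{0\}$ together with the single point $e_{d-1}$, and then inducting on $d$ by peeling off that one special fibre, with Grynkiewicz--Serra serving only as the $d=2$ base case.

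Two smaller points. First, the ``degenerate scenario'' $\dim(\pi_{\vec{v}}(A)+\pi_{\vec{v}}(B))<d-1$ that you set aside cannot occur: since $A+B$ lies in $\pi_{\vec{v}}(A+B)+L_{\vec{v}}$, the hypothesis $\dim(A+B)=d$ forces $\dim(\pi_{\vec{v}}(A)+\pi_{\vec{v}}(B))=d-1$. Second, your account of where $c=\dim(B)$ enters is not right: the clause is needed because when $r_2<d$ the expression $r_2-d+2$ may be nonpositive, and in the paper it is implemented (Theorem \ref{rszgn2}) by placing $B$ in a $k$-dimensional coordinate subspace, applying the $r_2\geq$ (dimension) case to $A\cap(\mathbb{Z}^k\times\{0\}^{d-k})$ and $B$ inside that subspace, and accounting separately for the $d-k$ remaining fibres of $A$.
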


The reader may note that since $r_1 \geq d$ and $r_2 \geq c$, this estimate recovers $\eqref{rt21}$ up to an $O_{d}(r_1 + r_2)$ term. For our purposes, we will be more interested in the cases when $r_1 > d$ and $r_2 > c$ while $r_1 + r_2 = O_d( |A|^{1- \delta})$ for some $\delta >0$. In such a setting, we see that the doubling factor attached to $|B|$ in the conclusion of Theorem $\ref{mn2}$ exceeds $d$ by some absolute positive constant. In fact, Theorem $\ref{mn2}$ can be used to furnish various types of structure theorems for sumsets of the form $A+B$. For instance, note that whenever $r_1 > d$ or $r_2 > c$, we get
\[  \bigg(d+1 - \frac{1}{r_1 -d +2} - \frac{1}{r_2 - c + 2}\bigg) \geq d+\frac{1}{6}, \]
which we may then combine with Theorem $\ref{mn2}$ to deduce the following result.

\begin{Corollary} \label{rnd2}
Let $A, B \subseteq \mathbb{R}^d$ be non-empty sets satisfying $\dim(A) = d$ and $|A| \geq |B|$ and
\begin{equation} \label{hyp8}
|A+B| \leq |A| + (d + 1/7)|B| - O_d(1). 
\end{equation}
Then there exists $\vec{v} \in \mathbb{R}^d \setminus \{0\}$ such that either $|\pi_{\vec{v}}(A)| = d$ and $|\pi_{\vec{v}}(B)| \in \{d, \dim B\}$, or $|B| = O_{d}(|\pi_{\vec{v}}(A)|  + |\pi_{\vec{v}}(B)|) $. 
\end{Corollary}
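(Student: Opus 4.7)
My plan is to deduce the corollary directly from Theorem~\ref{mn2} by a simple case split. First I would record two unconditional geometric facts. Since $\dim(A) = d$ and $A+B$ contains a translate of $A$, we have $\dim(A+B) = d$. Next, for any nonzero $\vec{v}$ the set $\pi_{\vec{v}}(A)$ spans an affine subspace of dimension $d-1$ inside the hyperplane $H_{\vec{v}}$, and so it must contain at least $d$ affinely independent points, giving $r_1 := |\pi_{\vec{v}}(A)| \geq d$. Similarly $r_2 := |\pi_{\vec{v}}(B)| \geq \dim(\pi_{\vec{v}}(B)) + 1 \geq \dim B$, so $r_2 \geq c$ in both regimes of the definition of $c$ in Theorem~\ref{mn2}. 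Hence Theorem~\ref{mn2} applies unconditionally to $(A,B)$ for every choice of $\vec{v}$.

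Next I would check the numerical gain in the coefficient of $|B|$ in Theorem~\ref{mn2} whenever conclusion~(i) of the corollary fails for a given $\vec{v}$. Failure of (i) means either $r_1 > d$, or $r_2 \notin \{d, \dim B\}$; in the second case $r_2 \geq \dim B$ forces $r_2 > \dim B$, and together with $r_2 \neq d$ and the case split defining $c$ this implies $r_2 > c$. In either situation one of the fractions $\tfrac{1}{r_1-d+2}$, $\tfrac{1}{r_2-c+2}$ is at most $1/3$ while the other is at most $1/2$, so the bracketed coefficient in Theorem~\ref{mn2} is at least $d+1-\tfrac12-\tfrac13 = d + \tfrac16$.

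With those two observations the proof becomes a dichotomy. If there exists some $\vec{v}$ with $r_1 = d$ and $r_2 \in \{d, \dim B\}$, conclusion~(i) holds and we are done. Otherwise, for any $\vec{v}$ (say one minimising $r_1 + r_2$) Theorem~\ref{mn2} yields
\[ |A+B| \geq |A| + (d + 1/6)|B| - (d-1)(r_1 + r_2), \]
which combined with the hypothesis \eqref{hyp8} gives $(1/42)|B| \leq (d-1)(r_1+r_2) + O_d(1)$. Using $r_1 + r_2 \geq d$ to absorb the additive $O_d(1)$ into the linear term then rearranges to $|B| = O_d(r_1 + r_2)$, which is conclusion~(ii).

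I do not foresee any serious obstacle: the corollary is essentially a numerical consequence of Theorem~\ref{mn2}. The only points requiring care are the routine verification of $r_1 \geq d$ and $r_2 \geq c$ needed to apply Theorem~\ref{mn2}, and the bookkeeping in the case split that ensures the negation of conclusion~(i), combined with $r_2 \geq \dim B$, genuinely produces the gain $r_1 > d$ or $r_2 > c$ rather than some degenerate sub-possibility that escapes both conclusions.
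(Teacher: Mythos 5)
Your proposal is correct and follows exactly the route the paper intends: the paper itself only sketches the deduction of Corollary \ref{rnd2} by observing that $r_1 > d$ or $r_2 > c$ forces the coefficient in Theorem \ref{mn2} to be at least $d + 1/6$, and your write-up supplies precisely the missing details (the verification that $r_1 \geq d$ and $r_2 \geq c$ hold for every $\vec{v}$, and that the negation of conclusion (i) yields $r_1 > d$ or $r_2 > c$). The final rearrangement to $|B| = O_d(r_1 + r_2)$ is also the intended conclusion, so there is nothing to add.
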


Indeed, we can generate many more such structure theorems by choosing different ranges for $r_1 - d$ and $r_2 - c$ in Theorem $\ref{mn2}$. This would lead to larger doubling factors being permissible in the hypothesis $\eqref{hyp8}$, but at the same time we would obtain weaker structural information about the sets $A$ and $B$. Here, we refer to bounds on $|\pi_{\vec{v}}(A)|$ and $|\pi_{\vec{v}}(B)|$ as structural information, since these tell us about the distribution of the sets $A$ and $B$ over translates of a $1$-dimensional subspace $L_{\vec{v}}$. 
\par

Our method also provides more specific structural results for difference sets, that is, we are able to characterise $d$-dimensional sets $A$ of $\mathbb{R}^d$ for which $|A-A|$ is close to the optimal lower bounds presented in Theorem $\ref{diff}$. In particular, we are able to show that such sets $A$ can be covered by a union of $2d-2$ parallel lines, up to some $O_{d}(|A|^{1- \delta})$ elements, where these lines themselves are contained in two translates of some hyperplane. We point the reader to Theorem $\ref{dbdg}$ in \S4 for further details regarding this.
\par

We now present a brief outline of our paper. We use \S2 to collect some preliminary lemmas from additive combinatorics. Next, we employ \S3 and \S4 to prove Theorems $\ref{diff}$ and $\ref{diffln}$ under the assumption that Theorem $\ref{mn2}$ holds true. The rest of the paper is then dedicated to proving Theorem $\ref{mn2}$. We begin by recording some results surrounding the technique of compressions in \S5. We divide the proof of Theorem $\ref{mn2}$ into two cases, depending on whether $r_2 \geq d$ or $r_2 < d$. The former is treated in \S6 in the form of Theorem $\ref{rszgn}$ and the latter is analysed in \S7 as Theorem $\ref{rszgn2}$.

\section{Additive combinatorics preliminaries}

In this section, we gather some preliminary results from additive combinatorics that we will employ throughout our paper, beginning with a classical lower bound for sumsets in Euclidean spaces, namely that if $d \in \mathbb{N}$ and $A, B$ are finite non-empty subsets of $\mathbb{R}^d$, then
\begin{equation} \label{runit}
 |A+B| \geq |A| + |B| - 1. 
 \end{equation}
 While this is sharp in general, one can deduce much more by analysing the dimension of the sets $A$ and $B$, and this is precisely the content of the aforementioned result of Ruzsa \cite{Ru1994}.

 \begin{lemma} \label{1ruzsa}
Given $d \in \mathbb{N}$ and finite non-empty subsets $A, B$ of $\mathbb{R}^d$ such that $\dim(A+B) =d$ and $|A| \geq |B|$, then 
\begin{equation} \label{ruzsa4}
 |A+B| \geq |A| + d |B| - d(d+1)/2. 
 \end{equation}
 \end{lemma}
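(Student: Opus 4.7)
My plan is to prove Lemma \ref{1ruzsa} by induction on the dimension $d$. The base case $d=1$ is immediate from \eqref{runit} since $d(d+1)/2 = 1$ in that case. For the inductive step, I would fix a direction $\vec{v} \in \mathbb{R}^d \setminus \{0\}$ and slice $A$ and $B$ by parallel translates of $H_{\vec{v}}$, writing $A = \bigsqcup_{i=1}^{s} A_i$ and $B = \bigsqcup_{j=1}^{t} B_j$ with $s = |\pi_{\vec{v}}(A)|$ and $t = |\pi_{\vec{v}}(B)|$, ordered so that the $\vec{v}$-coordinates of the fibers are increasing. Each sum $A_i+B_j$ lies in a unique translate of $H_{\vec{v}}$, and the monotone staircase $\{(1,j) : 1 \leq j \leq t\} \cup \{(i,t) : 2 \leq i \leq s\}$ indexes $s+t-1$ distinct such translates, so
\[
|A+B| \;\geq\; \sum_{j=1}^{t} |A_1+B_j| \;+\; \sum_{i=2}^{s} |A_i+B_t|.
\]

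The decisive move is to arrange the choice of $\vec{v}$ so that the extreme fiber $A_1$ is itself $(d-1)$-dimensional inside its hyperplane. When $\dim A = d$, this is guaranteed by taking $\vec{v}$ to be the inward normal to a facet of the convex hull of $A$: then $A_1$ contains at least $d$ affinely independent points and spans the hyperplane. Each sum $A_1+B_j$ is therefore $(d-1)$-dimensional, so the inductive hypothesis applies in dimension $d-1$ to give $|A_1+B_j| \geq \max(|A_1|,|B_j|) + (d-1)\min(|A_1|,|B_j|) - (d-1)d/2$. Combining this with the one-dimensional bound $|A_i+B_t| \geq |A_i|+|B_t|-1$ on the remaining $s-1$ fibers and telescoping via $\sum_i |A_i| = |A|$ and $\sum_j |B_j| = |B|$ should yield $|A|+d|B|-d(d+1)/2$ after routine arithmetic.

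The main obstacle is the degenerate case $\dim A < d$, which is compatible with the hypothesis $\dim(A+B) = d$ since the missing dimensions may be supplied by $B$. The facet-based choice of $\vec{v}$ breaks down here because $A$ has no $(d-1)$-dimensional fiber. I would instead take $\vec{v}$ orthogonal to the affine span of $A$ so that $s=1$, reducing the problem to estimating $\sum_{j=1}^{t}|A+B_j|$ inside the hyperplane containing $A$; the inductive hypothesis then has to be applied to each $A+B_j$ in dimension $d-1$, which requires additional care to verify full-dimensionality of each slice and to iterate the argument until the dimension deficit of $A$ is paid off. A secondary bookkeeping issue is that $|A| \geq |B|$ is only a global hypothesis and need not persist at the slice level, so when invoking induction on each $A_1+B_j$ one must consistently pair the $(d-1)$-factor with the smaller of $|A_1|, |B_j|$ for the telescoped sum to ultimately attach the factor $d$ to $|B|$ rather than $|A|$.
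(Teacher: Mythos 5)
The paper itself offers no proof of Lemma \ref{1ruzsa} --- it is imported verbatim from Ruzsa's paper \cite{Ru1994} --- so your argument has to stand on its own, and as written the inductive step has a genuine quantitative gap. The structural part is sound: taking $\vec{v}$ to be the normal to a facet of the convex hull of $A$ does force the extreme fiber $A_1$ to span its hyperplane, and the staircase does produce $s+t-1$ pairwise disjoint fiber sums. What fails is the arithmetic you describe as routine. For $j<t$, the fiber $B_j$ appears in exactly one term of your staircase, namely $|A_1+B_j|$, and the inductive bound $\max(|A_1|,|B_j|)+(d-1)\min(|A_1|,|B_j|)$ attaches the factor $d-1$ to the \emph{smaller} set. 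Since $|A_1|$ may be as small as $d$ while $|B_j|$ is arbitrarily large, in that regime $B_j$ picks up total coefficient $1$ rather than $d$, and no re-pairing can fix this because the inductive hypothesis genuinely gives nothing stronger. Separately, you invoke the $(d-1)$-dimensional hypothesis $t$ times, paying $t\cdot d(d-1)/2$ in error terms against a budget of $d(d+1)/2$; already for $t=2$ this overshoots, and the main-term slack need not absorb the difference when $|A_1|=d$.

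A concrete failure: take $d=3$, $A_1=\{0,e_1,e_2\}$ in the plane $x_3=0$, $A_2$ a large planar set in $x_3=1$, and $B=B_1\cup B_2$ with two large planar fibers of size $n/2$ at levels $x_3=0$ and $x_3=1$. Your scheme gives $|A_1+B_1|+|A_1+B_2|+|A_2+B_2|\geq (n/2+3)+(n/2+3)+(|A|-3+n/2-1)=|A|+\tfrac{3}{2}n+2$, far below the target $|A|+3n-6$ once $n\geq 6$ (and upgrading the last term via the two-dimensional bound still only reaches $|A|+2n$). The true sumset is of course much larger, but only because the off-staircase fibers $A_i+B_j$ carry most of it, and those are invisible to your decomposition. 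This is why the standard proofs of \eqref{ruzsa4} do not slice $B$ and apply a lower-dimensional sumset bound to every slice: instead one adjoins the elements of $B$ one at a time in a suitable order and shows each element beyond an initial simplex contributes at least $d$ new sums, so the dimension-reduction cost $d(d+1)/2$ is paid only once. Your handling of the degenerate case $\dim A<d$ is likewise only a plan (and there $\dim(A+B_j)=d-1$ can fail for individual $j$), but the fiber-counting issue above is the fundamental obstruction.
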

 
This already implies that $|A-A| \geq (d+1)|A| - O_{d}(1)$ for all $d$-dimensional subsets of $\mathbb{R}^d$. As previously noted, Stanchescu \cite{St1998} improved upon this lower bound in the case when $d=3$ and we will use this result as a base case in our proofs.
 
\begin{lemma} \label{stanc}
Let $A \subseteq \mathbb{R}^3$ be a finite set such that $\dim(A) = 3$. Then 
\[ |A-A| \geq (4 + 1/2) |A| - O(1). \]
\end{lemma}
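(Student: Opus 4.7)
The plan is to reduce the problem to a $2$-dimensional Freiman-type bound via projection, followed by a careful bookkeeping of the fibres. Fix a direction $\vec{v} \in \mathbb{R}^3 \setminus \{0\}$ and partition $A$ into the non-empty intersections $A_1, \ldots, A_r$ of $A$ with the distinct translates of $L_{\vec{v}}$. Setting $t_i = \pi_{\vec{v}}(A_i) \in H_{\vec{v}}$ and $A' = \{t_1, \ldots, t_r\}$, the hypothesis $\dim(A) = 3$ forces $\dim(A') = 2$ for a generic $\vec{v}$, so Freiman's lemma in the plane yields $|A' - A'| \geq 3r - 3$. Writing $n_i = |A_i|$ so that $\sum_i n_i = |A|$, each ordered pair $(i,j)$ places at least $n_i + n_j - 1$ distinct points of $A - A$ on the translate $L_{\vec{v}} + (t_i - t_j)$, while the diagonal line $L_{\vec{v}}$ itself contains at least $2\max_i n_i - 1$ elements of $A - A$.

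The main step is then to order the points of $A'$ so that each $t_k$ with $k \geq 4$ contributes, via the standard proof of Freiman's planar lemma, at least three new differences $t_k - t_{j_1}, t_k - t_{j_2}, t_k - t_{j_3}$ to $A' - A'$ with $j_i < k$. Each such new difference corresponds to a fresh line in $A - A$ containing at least $n_k + n_{j_i} - 1$ points, and summing over $k \geq 4$ recovers the $4|A|$ main term already present in the Freiman--Heppes--Uhrin bound $\eqref{inm}$. Extracting the additional $|A|/2$ relies on the freedom to choose $\vec{v}$ so that either some fibre $A_i$ is especially heavy, in which case the diagonal contribution $2\max_i n_i - 1$ boosts the count, or the projection $A'$ has $r \geq 5$, in which case Theorem $\ref{mn2}$ applied with $B = -A$ and $r_1 = r_2 = r$ already yields $|A - A| \geq 5|A| - 2|A|/(r-1) - O(r) \geq (9/2)|A| - O(r)$.

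The main obstacle is upgrading the error term from the $O(r)$ naturally produced by Theorem $\ref{mn2}$ to the required $O(1)$. This forces a careful treatment of the rigid regime in which $r \leq 4$ in every direction, where $A$ is supported on at most four parallel lines along each axis; here one essentially enumerates the finitely many possible affine configurations and verifies the bound $(9/2)|A| - O(1)$ by hand, exploiting the fact that three of the $r \leq 4$ fibres already account for all the $3$-dimensional structure of $A$. Balancing these two regimes, and in particular ensuring that the trade-off between the diagonal contribution and the off-diagonal Freiman count is tight up to a single additive constant rather than a term growing with $r$, is the delicate technical point navigated by Stanchescu in \cite{St1998}.
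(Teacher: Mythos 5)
The paper does not prove this lemma at all: it is imported verbatim from Stanchescu \cite{St1998} and used as a black box (it serves as the base case $P(3,r)$ in the proof of Proposition \ref{3lines}). Your proposal is therefore not really comparable to ``the paper's proof''; more importantly, it is not a proof either, since you explicitly defer the decisive step to ``the delicate technical point navigated by Stanchescu in \cite{St1998}''. If the intent is to cite the result, a citation suffices; if the intent is to reprove it, the sketch has concrete gaps.

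First, in the ``main step'' the summation does not obviously produce the $4|A|$ main term: summing $n_k + n_{j_i} - 1$ over the three new differences attached to each $t_k$ gives $3\sum_{k\ge 4} n_k$ plus a sum of $n_{j_i}$'s over indices you do not control, and a heavy fibre may never occur as a $j_i$. Making this bookkeeping work is exactly the content of the Freiman--Heppes--Uhrin argument for \eqref{inm}, and the extra $|A|/2$ on top of it is the entire difficulty of Stanchescu's theorem. Second, your dichotomy fails quantitatively: in the branch where some direction has $r \ge 5$, Theorem \ref{mn2} gives $|A-A| \ge (5 - 2/(r-1))|A| - 4r$, and since $r=|\pi_{\vec v}(A)|$ can be comparable to $|A|$, the error $-4r$ can swamp the main term entirely --- this is not merely ``$O(r)$ instead of $O(1)$'' but a potentially vacuous bound; the paper only tames $r$ via Lemma \ref{fri}, at the cost of an $O_d(|A|^{1-\delta})$ loss, which is precisely why Theorem \ref{diff} has that error term while Lemma \ref{stanc} does not. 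Third, the claim that the regime ``$r\le 4$ in every direction'' leaves only finitely many affine configurations is false: the four fibres can each contain arbitrarily many collinear points, so no finite enumeration closes that case. (Using Theorem \ref{mn2} here is not circular, since its proof in \S6--\S7 does not rely on Lemma \ref{stanc}, but invoking one of the paper's main theorems inside a preliminary lemma is backwards; note also that for the paper's actual application an $O(r)$ error would suffice, but the statement as given asserts $O(1)$, and Stanchescu's explicit constant is $-9$.)
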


We shall also use the following structure theorem from \cite{Ak2019}, which allows us to cover sets having small sumsets with a small number of translates of a line.

\begin{lemma} \label{fri}
Let $A$ be a finite subset of $\mathbb{R}^d$ with $|A| = n$, where $n$ is large enough. If
\begin{equation} \label{ti} |A+A| \leq c^6n, \end{equation}
for some $c > 0$, then there exist parallel lines $l_1, l_2, \dots, l_r$ in $\mathbb{R}^d$, and constants $0 < \sigma \leq 1/2$ and $C_1 > 0$ depending only on $c$ such that 
\[ |A \cap l_1| \geq \dots \geq  |A \cap l_r| \geq  |A \cap l_1|^{1/2} \geq C_1^{-1} n^{\sigma}. \]
and 
\[ |A\setminus (l_1 \cup l_2 \cup \dots \cup l_r)| < C_1 c^6 n^{1-\sigma}. \] 

\end{lemma}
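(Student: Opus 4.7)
My plan is to combine Plünnecke--Ruzsa-type bounds with a Freiman-type structural theorem in Euclidean space to find a single direction along which $A$ projects to few points, and then greedily select the heaviest parallel fibers. Starting from $|A+A|\le c^6 n$, Plünnecke--Ruzsa upgrades the hypothesis to $|kA-kA|\le c^{O_k(1)} n$ for any fixed $k$, and the Euclidean version of Freiman's theorem then yields that $A$ is efficiently contained in a generalized arithmetic progression $P = v_0 + \sum_{i=1}^{k}[0,N_i)v_i$ of rank $k = O_c(1)$ and size $|P|\le C(c)\, n$, with the sides ordered so that $N_1 \ge N_2 \ge \dots \ge N_k$.

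I would then take $\vec v = v_1$ as the direction associated with the longest side, so that $A \subseteq P$ is covered by $T := N_2\cdots N_k \le C(c)\, n/N_1$ translates of the line $L_{\vec v}$. Arrange these translates as $l_1, l_2, \dots, l_T$ in non-increasing order of $|A\cap l_i|$, and let $r$ be the largest index with $|A\cap l_r|\ge |A\cap l_1|^{1/2}$. By construction, the chain of inequalities $|A\cap l_1|\ge |A\cap l_2|\ge \dots \ge |A\cap l_r|\ge |A\cap l_1|^{1/2}$ is automatic; what remains is to calibrate the parameter $\sigma$ so that $|A\cap l_1|^{1/2}\ge C_1^{-1} n^{\sigma}$ and the leftover bound holds.

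For the leftover, each of the at most $T$ discarded fibers contributes fewer than $|A\cap l_1|^{1/2}$ points, so
\[
|A\setminus (l_1\cup\dots\cup l_r)| \le T\cdot |A\cap l_1|^{1/2} \le \frac{C(c)\,n}{N_1}\cdot |A\cap l_1|^{1/2}.
\]
On the other hand, by pigeonhole among the $T$ covering lines, $|A\cap l_1|\ge n/T \ge N_1/C(c)$, which gives a lower bound of $N_1^{1/2}/C(c)^{1/2}$ on the threshold. Choosing $\sigma$ so that $N_1$ is at least $n^{2\sigma}$ forces both $|A\cap l_1|^{1/2}\ge C_1^{-1}n^{\sigma}$ and, substituting back, $|A\setminus\bigcup l_i|\le C_1\, c^6\, n^{1-\sigma}$. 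The constraint $\sigma\le 1/2$ arises from the trivial inequality $|A\cap l_1|\le n$.

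The main obstacle I anticipate is the calibration step: if all sides $N_i$ are of comparable size, then $N_1 \asymp n^{1/k}$ and $\sigma$ must be chosen proportional to $1/k$, which depends on the Freiman rank (and hence on $c$); ensuring that the resulting $\sigma$ is strictly positive and that both bounds meet simultaneously is where most of the effort goes. An alternative route, avoiding Freiman's theorem, would be to iterate the projection directly: use an additive energy argument via $E(A)\ge n^4/|A-A|$ to extract a popular direction, peel off the corresponding fibers, and recurse on the residual set to control the accumulated error; this is more elementary but requires careful bookkeeping of the doubling constants as the set shrinks.
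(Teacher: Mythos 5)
The paper does not actually prove this lemma — it is imported verbatim from \cite{Ak2019} — but your argument is essentially the same route taken there: pass through Pl\"unnecke--Ruzsa and Freiman's theorem for torsion-free groups to place $A$ in a generalized arithmetic progression of rank $k=O_c(1)$, project along the longest side $v_1$, and retain the fibres above the threshold $|A\cap l_1|^{1/2}$, with $\sigma=1/(2k)$. The proof is correct; the only two points you should make explicit are that the containing progression must be taken \emph{proper} (or at least satisfy $N_1\cdots N_k\le C(c)n$ rather than merely $|P|\le C(c)n$) so that the bound $T\le C(c)n/N_1$ on the number of covering lines is legitimate, and that the final substitution in the leftover estimate uses the trivial upper bound $|A\cap l_1|\le N_1$ to get $T\cdot|A\cap l_1|^{1/2}\le C(c)nN_1^{-1/2}\le C(c)n^{1-\sigma}$.
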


We now turn to other inverse type results for sumsets. We will use the following result of Grynkiewicz and Serra \cite[Theorem 1.3]{GS2010}.

\begin{lemma} \label{gsne2}
Let $A, B \subseteq \mathbb{R}^2$ be finite, non-empty sets and let $\vec{v} \in \mathbb{R}^2 \setminus \{0\}$ satisfy $|\pi_{\vec{v}}(A)| = r_1$ and $|\pi_{\vec{v}}(B)| = r_2$. Then
\begin{equation}  \label{gs1}
 |A+B| \geq \bigg( \frac{|A|}{r_1} + \frac{|B|}{r_2} - 1 \bigg)  (r_1 + r_2 - 1). 
\end{equation}
\end{lemma}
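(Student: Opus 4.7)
The plan is to reduce to the one-dimensional sumset bound $\eqref{runit}$ by slicing $A$ and $B$ along lines parallel to $\vec{v}$. Writing $\pi = \pi_{\vec{v}}$ and identifying $H_{\vec{v}}$ with $\mathbb{R}$, I would order $\pi(A) = \{u_1 < \cdots < u_{r_1}\}$ and $\pi(B) = \{w_1 < \cdots < w_{r_2}\}$, and set $a_i = |A \cap \pi^{-1}(u_i)|$, $b_j = |B \cap \pi^{-1}(w_j)|$, so that $\sum_i a_i = |A|$ and $\sum_j b_j = |B|$. Fibers of $A+B$ over distinct points of $H_{\vec{v}}$ are disjoint, and for each pair $(i,j)$ the sum $(A \cap \pi^{-1}(u_i)) + (B \cap \pi^{-1}(w_j))$ lies on the single line $\pi^{-1}(u_i + w_j)$ and contains at least $a_i + b_j - 1$ elements by $\eqref{runit}$. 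Consequently the fiber of $A+B$ over $u_i + w_j$ has size at least $a_i + b_j - 1$.

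The next step is to sum these fiber lower bounds along a chain of $(i,j)$-pairs with strictly increasing sums. Any sequence of unit right or up steps from $(1,1)$ to $(r_1, r_2)$ in $\mathbb{Z}^2$ visits $N := r_1 + r_2 - 1$ lattice points $(i_1, j_1), \ldots, (i_N, j_N)$, and along such a monotone path the sums $u_{i_\ell} + w_{j_\ell}$ are strictly increasing, since each step increases either $u_i$ or $w_j$. Therefore, for every monotone path $P$ one obtains
\[ |A+B| \geq \sum_{(i,j) \in P}(a_i + b_j - 1). \]

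The final step is to average this estimate over all $\binom{r_1 + r_2 - 2}{r_1 - 1}$ monotone paths. The number of paths through a fixed vertex $(i,j)$ equals $\binom{i+j-2}{i-1}\binom{r_1+r_2-i-j}{r_1-i}$, and the Vandermonde--Chu convolution identity yields
\[ \sum_{j=1}^{r_2}\binom{i+j-2}{i-1}\binom{r_1+r_2-i-j}{r_1-i} = \binom{r_1+r_2-1}{r_1}, \]
independent of $i$, with the symmetric identity producing $\binom{r_1+r_2-1}{r_2}$ after summing over $i$. Dividing by the total path count $\binom{r_1+r_2-2}{r_1-1}$, the averaged contributions of the $a_i$, $b_j$, and $-1$ terms simplify to $|A|(r_1+r_2-1)/r_1$, $|B|(r_1+r_2-1)/r_2$, and $-(r_1+r_2-1)$ respectively, and combining them yields precisely $(r_1+r_2-1)(|A|/r_1 + |B|/r_2 - 1)$. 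By pigeonhole some monotone path attains at least this value, which is exactly the desired bound. The main obstacle I anticipate is the bookkeeping of the two Vandermonde identities and the verification that the three averages combine exactly into the conjectured closed form; once these are in hand, the rest of the argument is structural.
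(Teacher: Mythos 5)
Your proof is correct, but it is worth noting that the paper does not prove this lemma at all: it imports it verbatim as Theorem 1.3 of Grynkiewicz--Serra \cite{GS2010}, so any proof you give is by definition a different route. Your argument is a clean, self-contained derivation of the inequality (though not of the finer structural information in \cite{GS2010}): slicing into fibers over $\pi_{\vec{v}}(A)=\{u_1<\dots<u_{r_1}\}$ and $\pi_{\vec{v}}(B)=\{w_1<\dots<w_{r_2}\}$, noting that the fiber of $A+B$ over $u_i+w_j$ has at least $a_i+b_j-1$ elements, and that along any monotone lattice path the $r_1+r_2-1$ values $u_i+w_j$ are strictly increasing, hence index pairwise disjoint fibers, is all valid. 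The averaging step also checks out: the number of monotone paths through $(i,j)$ is $\binom{i+j-2}{i-1}\binom{r_1+r_2-i-j}{r_1-i}$, the Chu--Vandermonde convolution gives $\sum_j\binom{i+j-2}{i-1}\binom{r_1+r_2-i-j}{r_1-i}=\binom{r_1+r_2-1}{r_1}$ independently of $i$, and the ratio $\binom{r_1+r_2-1}{r_1}/\binom{r_1+r_2-2}{r_1-1}=(r_1+r_2-1)/r_1$ yields exactly the coefficients $|A|(r_1+r_2-1)/r_1$, $|B|(r_1+r_2-1)/r_2$ and $-(r_1+r_2-1)$, so the maximum over paths is at least the stated bound. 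What your approach buys is independence from the cited reference at the cost of only the inequality itself; the Grynkiewicz--Serra theorem is a stronger structural statement of which \eqref{gs1} is a consequence, and the paper only ever uses the inequality (in the base case $d=2$ of Theorem \ref{rszgn}), so your argument would suffice for the paper's purposes.
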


Finally, we will use a standard inequality to move from difference sets to sumsets, and we mention this as it is stated in \cite[Corollary 2.12]{TV2006}.

\begin{lemma} \label{rusza}
Suppose that $U,V$ are finite sets in some abelian group $G$. Then 
\begin{equation} \label{ru2} |U+V| \leq \frac{|U-V|^3}{|U||V|}.  \end{equation}
\end{lemma}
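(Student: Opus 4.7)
The plan is to reduce the claimed inequality, which is equivalent to $|U+V| \cdot |U| \cdot |V| \leq |U-V|^3$, to a combination of two more tractable bounds.

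First, I would invoke the Pl\"unnecke--Ruzsa inequality applied to the pair $(U, -V)$. Writing $K = |U + (-V)|/|U| = |U-V|/|U|$ for the doubling constant, the Pl\"unnecke--Ruzsa bound $|n(-V) - m(-V)| \leq K^{n+m}|U|$ taken at $(n,m) = (2,0)$ yields the auxiliary inequality
\[ |V+V| \leq |U-V|^2/|U|. \]

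Second, I would derive the inequality $|V| \cdot |U+V| \leq |U-V| \cdot |V+V|$ from the Ruzsa triangle inequality $|A| \cdot |B-C| \leq |A-B| \cdot |A-C|$, which admits an elementary injection proof valid in any abelian group (for each $x \in B-C$, fix $x = b(x) - c(x)$ and send $(a, x) \mapsto (a - b(x), a - c(x))$). Substituting $C \mapsto -C$ yields the variant $|A| \cdot |B+C| \leq |A-B| \cdot |A+C|$, and applying this with $A = V$, $B = U$, $C = V$ produces exactly the desired bound.

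Multiplying these two inequalities and dividing by $|V|$ gives $|U+V| \leq |U-V|^3/(|U| \cdot |V|)$, as required. The only non-trivial input is the first step, which genuinely requires Pl\"unnecke--Ruzsa machinery rather than just the Ruzsa triangle inequality; Petridis's short argument provides this in any abelian group.
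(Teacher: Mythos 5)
Your proof is correct. Note that the paper does not actually prove this lemma; it is quoted verbatim from Tao--Vu, \cite[Corollary 2.12]{TV2006}, so there is no in-paper argument to compare against. Your derivation is the standard one: in the language of the Ruzsa distance $d(X,Y)=\log\bigl(|X-Y|/\sqrt{|X||Y|}\bigr)$, your second step is exactly the triangle inequality $d(U,-V)\le d(U,V)+d(V,-V)$, and your first step is the bound $|V+V|\le |U-V|^2/|U|$ needed to control $d(V,-V)$. Both steps check out: the injection $(a,x)\mapsto(a-b(x),a-c(x))$ is indeed injective (the difference of the two coordinates recovers $-x$, hence $b(x)$, $c(x)$, and then $a$), and the Pl\"unnecke--Ruzsa inequality applied to the pair $(U,-V)$ with $(n,m)=(2,0)$ gives precisely $|V+V|\le K^2|U|$ with $K=|U-V|/|U|$. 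Your closing remark is also accurate: the first step cannot be replaced by another application of the triangle inequality (attempting $|V+V|\le |A-V|\,|A+V|/|A|$ reintroduces the quantity $|U+V|$ being bounded), so some Pl\"unnecke-type input such as Petridis's argument is genuinely required there.
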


\section{Proofs of Theorems $\ref{diff}$ and $\ref{diffln}$}

We begin by following our previous approach in \cite{Ak2021}. That is, we first reduce to the case when $A$ is contained in few translates of a $1$-dimensional subspace. So let $A \subseteq \mathbb{R}^d$ be a finite, non-empty set such that $\dim(A) = d$ and $|A| = n$ where $n$ is a large enough natural number. We may assume that $|A-A| \leq 8(d-1)|A|$, since otherwise we are done, and so, we apply $\eqref{ru2}$ with $U= V = A$ to show that
\[ |A+A| \leq |A-A|^3 |A|^{-2} \leq (8d-8)^3 |A|. \]
We now apply Lemma $\ref{fri}$ with $c = (8d-8)^{1/2}$ to get parallel lines $l_1, l_2, \dots, l_r$ in $\mathbb{R}^d$, and constants $0 < \sigma \leq 1/2$ and $C_1 > 0$ depending only on $d$ such that 
\begin{equation} \label{ld}
 |A \cap l_1| \geq \dots \geq  |A \cap l_r| \geq  |A \cap l_1|^{1/2} \geq C_1^{-1} n^{\sigma}. 
 \end{equation}
and 
\[ |A\setminus (l_1 \cup l_2 \cup \dots \cup l_r)| < C_1 c^{6} n^{1-\sigma}. \]
Writing $S = A \cap (l_1 \cup l_2 \cup \dots \cup l_r)$ and $E = A \setminus S$, we note that $\eqref{ld}$ gives us 
\[ |A| \geq |S| = \sum_{i=1}^{r} |A \cap l_i| \geq r C_1^{-1} |A|^{\sigma}, \]
which, in turn, implies that
\begin{equation} \label{ubr}
 r \leq C_1 |A|^{1-\sigma}.
 \end{equation}
\par

We will now show that it suffices to prove Theorem $\ref{diff}$ for the set $S$. Assuming that Theorem $\ref{diff}$ holds for the set $S$, we divide our proof into two cases. First, if $\dim(S) = d$, then we have that
\begin{align*}
|A-A|  &  \geq |S-S| \geq (2d-2  +  1/({d -1}))|S| - O_d(|S|^{1- \delta}) \\
& \geq (2d-2 + 1/({d -1}))(|A| - |E|) - O_d(|A|^{1-\delta}) \\
& \geq (2d-2  +  1/({d -1}))|A| - O_d(|A|^{1 - \sigma}) + O_d(|A|^{1-\delta}) \\
& = (2d-2  +  1/({d -1}))|A| - O_d(|A|^{1- \min{(\sigma, \delta)}}).
\end{align*}
Since both $\delta$ and $\sigma$ are strictly positive constants that only depend on $d$, we see that $\min{(\sigma, \delta)}$ is also a strictly positive constant depending only on $d$, and consequently, our claim is verified when $\dim(S) =d$.
\par

Our second case is when $\dim(S) = d_1 < d$, in which case there are linearly independent elements $a_1, \dots, a_{d-d_1} \in E$ such that $\dim(S \cup \{a_1, \dots, a_{d-d_1} \}) = d$. This also implies that $a_1, \dots, a_{d-d_1}$ lie outside the affine span of $S$, and so, we have that the sets $S - S, S- a_1, \dots, S-a_{d-d_1}, a_1 - S, \dots, a_{d-d_1} - S$ are pairwise disjoint. Consequently, we infer that
\begin{align*}
 |A-A|  &  \geq |S-S| + \sum_{i=1}^{d-d_1} (|S- a_i| + |a_i - S|) \\
 & \geq (2d_1-2  + 1/({d_1-1}))|S|  - O_d(|S|^{1- \delta})  + \sum_{i=1}^{d-d_1} 2|S|\\
 & \geq (2d - 2 +  1/({d -1})) |S| - O_d(|S|^{1-\delta}) \\
 & \geq (2d-2 +  1/({d -1}))|A| -  O_d(|A|^{1- \min{(\sigma, \delta)}}).
\end{align*}
\par
As before, we see that $\min{(\sigma, \delta)}$ is a strictly positive constant depending only on $d$, and hence, our claim is proved. Hence, we will now prove that Theorems $\ref{diff}$ and $\ref{diffln}$ hold true for sets contained in a union of parallel lines.

\begin{Proposition} \label{3lines}
Let $d$ be a natural number and let $l_1, l_2, \dots, l_r$ be $r$ parallel lines in $\mathbb{R}^d$ parallel to $L_{\vec{v}}$ for some $\vec{v} \in \mathbb{R}^d \setminus \{0\}$. Suppose that $A \subseteq \mathbb{R}^d$ is a finite, non-empty set such that $A \subseteq l_1 \cup l_2 \cup \dots \cup l_r$ and $\dim(A) = d$. Then we have
\begin{equation} \label{out5}
 |(A-A) \setminus L_{\vec{v}}| \geq (2d-2)|A| - 2d^2 r.
 \end{equation}
Moreover, if we have that $d \geq 4$ and that $|A \cap l_i| \geq 2d^2$ for each $1 \leq i \leq r$, then
\begin{equation} \label{out4}
|A-A| \geq \bigg(2d-2 + \frac{1}{d-1}\bigg)|A| - K_d r,
 \end{equation}
where $K_d = 1000d^3$.
\end{Proposition}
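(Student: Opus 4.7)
The plan is to project $A$ along $\vec{v}$. Set $\pi = \pi_{\vec{v}}$, $P = \pi(A) = \{p_1, \ldots, p_r\} \subseteq H_{\vec{v}}$, $A_i = A \cap \pi^{-1}(p_i)$, $n_i = |A_i|$, and $D = (P - P) \setminus \{0\}$. Since $\dim(A) = d$, we have $\dim(P) = d-1$. The basic decomposition is
\[ (A - A) \setminus L_{\vec{v}} \;=\; \bigsqcup_{\vec{w} \in D} M(\vec{w}), \qquad M(\vec{w}) := \bigcup_{(i,j):\, p_i - p_j = \vec{w}} (A_i - A_j) \;\subseteq\; \vec{w} + L_{\vec{v}}, \]
with $|M(\vec{w})| \ge n_i + n_j - 1$ for any pair $(i,j)$ mapping to $\vec{w}$, by $\eqref{runit}$. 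Since $M(\vec{w})$ and $M(-\vec{w})$ lie on disjoint translates of $L_{\vec{v}}$, their contributions add.

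For Part 1, I would sum $|M(\vec{w})| + |M(-\vec{w})| \ge 2(n_a + n_b) - 2$ over the $|D|/2$ equivalence classes $\{\vec{w}, -\vec{w}\}$, where $\{a, b\}$ maximises $n_a + n_b$ among unordered pairs mapping to the class. The main combinatorial input is that representative pairs can be chosen so that each index $i$ enters at least $d-1$ classes, up to an $O(d)$ correction. This uses Lemma~$\ref{1ruzsa}$ applied to $P$ in $H_{\vec{v}}$ (yielding $|D| \ge dr - d(d-1)/2 - 1$), together with a Freiman-style ordering $p_1, \ldots, p_r$ with $p_1, \ldots, p_d$ affinely independent, tracking the fresh $\vec{w}$'s generated each time a new point $p_k$ is appended and assigning representatives involving $p_k$ to them. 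Summing gives $|(A-A) \setminus L_{\vec{v}}| \ge 2(d-1)|A| - O(d^2 r)$, giving Part 1.

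For Part 2, I would decompose
\[ |A - A| \;=\; |(A - A) \setminus L_{\vec{v}}| + |(A - A) \cap L_{\vec{v}}|, \]
use Part 1 for the first term, and bound the second via $\bigcup_i (A_i - A_i) \subseteq (A-A) \cap L_{\vec{v}}$, giving $|(A - A) \cap L_{\vec{v}}| \ge 2 n_{\max} - 1$ with $n_{\max} = \max_i n_i$. If $n_{\max} \ge |A|/(2(d-1))$, the line part contributes $\ge |A|/(d-1) - 1$ and Part 2 follows directly. Otherwise $n_{\max} < |A|/(2(d-1))$, which forces $r > 2(d-1)$ since $r \cdot n_{\max} \ge |A|$; in this regime the naive ``all-pairs'' sum $\sum_{i<j} 2(n_i + n_j - 1) = 2(r-1)|A| - r(r-1)$ already exceeds the target $(2d-2+1/(d-1))|A|$ by a comfortable margin, and the hypothesis $n_i \ge 2d^2$ together with Theorem~$\ref{mn2}$ (applied with $B = -A$, so that $r_1 = r_2 = r$ and $c = d$) controls the deficit from parallelogram-type collapses in $P$ within the $K_d r = 1000 d^3 r$ error budget.

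The main obstacle is the combinatorial bookkeeping in Part 1: verifying that representative pairs can always be chosen so that the weighted sum $\sum_i \gamma_i n_i$ reaches $2(d-1)|A|$ up to $O(d^2 r)$ slack (where $\gamma_i$ counts how many classes use $i$). A secondary difficulty, in Part 2 Case (ii), is extracting precisely the $1/(d-1)$ improvement from the excess when $r > 2(d-1)$, for which one must use both the $n_i \ge 2d^2$ hypothesis and the fine control offered by Theorem~$\ref{mn2}$.
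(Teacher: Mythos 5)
Your Part 2, in the regime $n_{\max} < |A|/(2(d-1))$ (hence $r > 2(d-1)$), rests on a false premise. The ``all-pairs'' sum $\sum_{i<j} 2(n_i+n_j-1) \approx 2(r-1)|A|$ is not a lower bound for $|(A-A)\setminus L_{\vec{v}}|$ up to an $O(K_d r)$ correction: distinct pairs $(i,j)$ with $p_i-p_j=p_k-p_l$ feed the same fibre $M(\vec{w})$, and the resulting collapse is a \emph{main-term} loss, not an error-term one. Concretely, take $P$ to be an arithmetic progression of length $N$ in one direction together with $d-2$ further points spanning, with all fibres equal translates of a common progression of length $M \geq 2d^2$. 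Then $|(A-A)\setminus L_{\vec{v}}|$ is of order $d\,|A|$ while the all-pairs sum is of order $r|A| = (N+d-2)|A|$, so the ``deficit'' is $\approx (r-d)|A|$, which no $O(d^3 r)$ budget absorbs. Nor can Theorem $\ref{mn2}$ applied globally with $B=-A$ rescue this: it yields at best $|A-A|\geq (d+2)|A|-O_d(r)$, which for $d\geq 5$ falls well short of $(2d-2+\tfrac{1}{d-1})|A|$. This is precisely why the paper does not argue fibre-by-fibre here; it runs an induction on $(d,r)$, peeling off the lines lying over the farthest translate $H_2$ of a facet hyperplane $H_1$ of the convex hull of $\pi_{\vec v}(A)$, so that $X_1^{\pi}-X_2^{\pi}$, $X_2^{\pi}-X_1^{\pi}$ and $Y_2^{\pi}-Y_2^{\pi}$ are genuinely disjoint; the $\tfrac{1}{d-1}$ gain then comes either from the inductive bound $P(d-1,r-1)$ on $Y_2^\pi-Y_2^\pi$ combined with the improved doubling constant $d-1+\tfrac16$ of Theorem $\ref{mn2}$ applied to the \emph{pieces} $Y_2^\pi, X_2^\pi$, or, when $r\leq 2d-2$, from the line contribution $2|A|/r$ exactly as in your case (i).

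Your Part 1 is also incomplete, though less fatally so. The statement you need -- that one can select, for each difference class $\{\vec w,-\vec w\}$ used, a single representative pair so that every index is covered at least $d-1$ times -- is a degree-constrained system-of-distinct-representatives problem, and Lemma $\ref{1ruzsa}$ applied to $P$ only supplies the aggregate count $|D|\gtrsim dr$, not the local covering property (a point of $P$ that is the midpoint of many pairs, e.g.\ the centre of a cross-polytope $\{0,\pm e_1,\dots,\pm e_{d-1}\}$, lies in exactly $d-1$ classes, so the selection is forced and tight there). You flag this yourself as the main obstacle; the paper sidesteps it entirely by proving $Q(d,r)$ by induction on $r$, with the base case $r=d$ handled by pairwise disjointness of all $(A\cap l_i)-(A\cap l_j)$ and the inductive step again using the two-hyperplane decomposition together with Lemma $\ref{1ruzsa}$ applied to $X_1^\pi - X_2^\pi$. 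As written, neither half of the proposal constitutes a proof.
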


We remark that Theorem $\ref{diff}$ follows from combining the preceding discussion with $\eqref{ld}$, $\eqref{ubr}$ and $\eqref{out4}$, while Theorem $\ref{diffln}$ follows from $\eqref{out5}$.
\par

We now begin the proof of Proposition $\ref{3lines}$. Our strategy will be to follow induction on the dimension $d$ and number of parallel lines $r$ that contain $A$.  Let $P(d,r)$ be the statement that $\eqref{out4}$ holds for $d$-dimensional sets $A$ which can be covered by $r$ parallel lines and let $Q(d,r)$ be the statement that $\eqref{out5}$ holds for $d$-dimensional sets $A$ which can be covered by $r$ parallel lines. We note that as $\dim(A) = d$, the number of parallel lines $r$ containing $A$ must always be at least $d$, and so, our base cases for $\eqref{out4}$ will be to prove $P(2,r)$ and $P(3,r)$ for all $r \geq 3$ and $P(d,d)$ for all $d \geq 4$. Similarly, for $\eqref{out5}$, our base case will be to prove $Q(1,r)$ holds true for each $r \in \mathbb{N}$ as well as that $Q(d,d)$ holds true for all $d \geq 2$, though the former can be noted to be trivially true.
\par

In our inductive step, for a given $d, r \in \mathbb{N}$ such that $r > d$ and $d \geq 2$, we will show that $Q(d,r)$ holds true if $Q(k,r-1)$ holds for all $k \leq d$. Similarly, we will show that whenever $r > d \geq 4$, we will prove that $P(d,r)$ holds if $P(k,r-1)$ holds for all $k \leq d$ as well as if $Q(d,r)$ holds true for all $r\geq d$. We will, hence, conclude that $Q(d,r)$ holds for all $r \geq d \geq 1$ and that $P(d,r)$ holds true for all $r \geq d \geq 3$.
\par

We begin with the base cases for $P(d,r)$. The case when $d \in \{2,3\}$ is handled by $\eqref{inm}$ and Lemma $\ref{stanc}$, and consequently, we proceed with our second base case, that is, when $r=d$. After an appropriate linear transformation, we may assume that $l_1$ is parallel to the vector $e_d$. Thus, writing $L_d = L_{e_d}= \{ \lambda \cdot e_d \ | \ \lambda \in \mathbb{R}\}$, we see that all the sets of the form $(A \cap l_i) - (A \cap l_j)$ are pairwise disjoint whenever $i \neq j$, as well as that they do not intersect $L_d $. Consequently, we have
\begin{align*}
|(A-A)\setminus L_d | & \ \geq \ \sum_{i \neq j} |(A \cap l_i) - (A \cap l_j)| \ \geq \ \sum_{i \neq j} (|A \cap l_i| + |A \cap l_j| - 1) \\
        & \ \geq 2(d-1) \sum_{i=1}^{d} |A \cap l_i|  - d^2 \ = \  2(d-1) |A| - d^2 .
\end{align*}
On the other hand, suppose that $|A \cap l_1| \geq |A \cap l_i|$ for each $1 \leq i \leq d$. Then, we have that
\[ |(A-A) \cap L_d | \geq |(A \cap l_1) - (A \cap l_1)|  \geq 2 |A \cap l_1| -1 \geq 2|A|/d -1. \]
Combining the above two estimates with the fact that $2/d \geq 1/(d-1)$ whenever $d \geq 4$, we see that we are done with our second base case for $P(d,r)$. 
\par

We further note that this argument also covers the base case $Q(d,d)$, so it suffices to check the validity of $Q(1,r)$ for all $r \in \mathbb{N}$, but this can be noted to be trivially true. Thus, we now move to the inductive step, which will be our primary focus in the next section.

\section{The Inductive Step}

Let $r,d$ be natural numbers such that $r > d \geq 2$. As previously mentioned, we assume that $P(k,r-1)$ holds for all $2 \leq k \leq d$. Furthermore, upon applying a suitable linear transformation, we may assume that $\vec{v} = e_d$. Let $H$ be the hyperplane that is orthogonal to $l_1$. For each $1 \leq i \leq r$, we write $x_i$ to be the point where $H$ and $l_i$ intersect, and we let $X = \{ x_1, \dots, x_r \}$. As $\dim(A) = d$, we see that $\dim(X) = d-1.$ Moreover, we denote $\pi$ to be the projection map from $\mathbb{R}^d$ to $H$. For any $Y \subseteq H$, we let $Y^{\pi}$ be a subset of $\mathbb{R}^d$ such that
\[Y^{\pi} = \{ x \in A \ | \ \pi(x) \in Y \}. \]
Thus $Y^{\pi}$ is the pre-image of $Y$ under $\pi$ in $A$. Because we are projecting along the direction of $l_1$ and $|A \cap l_i| \geq 2$ for all $1 \leq i \leq r$, we have $\dim(Y^{\pi}) = \dim(Y) + 1,$ for all $Y \subseteq \pi(A) := \{ \pi(a) \ | \ a \in A \} $. 
\par

We will use ${\|.\|}_d$ to denote the Euclidean norm in $\mathbb{R}^d$. As $H$ is a $(d-1)$-dimensional subspace of $\mathbb{R}^d$, we can find an invertible linear map $\phi$ from $H$ to $\mathbb{R}^{d-1}$. Fixing such a $\phi$, we can induce a norm ${\|.\|}_H$ on $H$ by writing ${\|x\|}_H = {\|\phi(x) \|}_{d-1}$, for all $x \in H$. 
\par

We now consider the convex hull $C$ of $X$. As $\dim(C) = \dim(X) = d-1$, we write $D_1, \dots, D_{t}$ to be the $(d-2)$-dimensional facets of $C$, where $t \in \mathbb{N}$ is suitably chosen. Furthermore, for any $i \neq j$, we observe that whenever the set $D_i \cap D_j$ is non-empty, it is contained in a $(d-3)$-dimensional affine subspace. Without loss of generality, we may assume that
\begin{equation} \label{34los}
| D_1^{\pi} | \geq |D_i^{\pi}| 
\end{equation}
for each $1 \leq i \leq t$. Let $H_1$ be the affine span of $D_1$ and let $l'$ be the line in $H$ that is orthogonal to $H_1$. 
\par

We cover $X$ with translates of $H_1$ and denote $H_2$ to be the translate of $H_1$ such that $H_2 \cap X \neq \emptyset$ and ${\| (l' \cap H_2) - (l' \cap H_1) \|}_{H}$ is maximised. The existence and uniqueness of such an $H_2$ is confirmed by the fact that $H_1$ is a $(d-2)$-dimensional subspace of $H$, that contains a $(d-2)$-dimensional facet of $C$ and $l'$ is chosen to be orthogonal to $H_1$. 
Thus for all translates $H'$ of $H_1$ such that $H' \cap X \neq \emptyset$ and $H' \neq H_2$, we have
\begin{equation}  \label{far}
{\| (l' \cap H_2) - (l' \cap H_1) \|}_{H} > {\| (l' \cap H') - (l' \cap H_1) \|}_{H}.
\end{equation}
Here, for ease of notation, we write $X_i = H_i \cap X$ and $Y_i = X \setminus X_i$ for $i=1,2$. We note that by definition of $X_2$, the set $X_2$ must lie in $D_i$ for some $2 \leq i \leq t$, which, in turn, combines with $\eqref{34los}$ to deliver the bound
\begin{equation} \label{36los}
|X_1^{\pi}| \geq |X_2^{\pi}|. 
\end{equation}
\par

Our strategy, now, essentially involves analysing how $X_1$, $X_2, Y_1$ and $Y_2$ interact with each other. We begin by translating our set $A$, and thus $X$, so that $0 \in H_2$. From $\eqref{far}$, we deduce that $X_2 - X_1, X_1 - X_2$ and $Y_2 - Y_2$ are pairwise disjoint and consequently, the sets $X_2^{\pi} - X_1^{\pi}, X_1^{\pi} - X_2^{\pi}$ and $Y_2^{\pi} - Y_2^{\pi}$ are pairwise disjoint. 
\par

We will now show that $Q(d,r)$ holds true if $Q(k,r-1)$ holds true for each $k \leq d$, and in fact, we will further divide our argument into two cases, the first being when $Y_2 \neq X_1$, that is, when $A$ is contained in more than two translates of the hyperplane $H_1$. In this case, we have $\dim(Y_2) = d-1$, which in turn implies that $\dim(Y_2^{\pi}) = d$. Moreover, we recall that $\dim(X_1^{\pi}) = \dim(X_1) + 1 = d-1$ and so, we apply Lemma $\ref{1ruzsa}$, together with $\eqref{36los}$, to see that
\begin{equation} \label{ind1}
 |X_1^{\pi} - X_2^{\pi}| \geq |X_1^{\pi}|  + (d-1)|X_2^{\pi}|  - d^2. 
 \end{equation}
Thus, upon amalgamating $Q(d,r-1)$ along with the fact that the sets $X_1^{\pi} - X_2^{\pi}, X_2^{\pi} - X_1^{\pi} $ and $L_{d}$ are pairwise disjoint, we find that
 \begin{align*}
 |(A-A) \setminus L_d| & \geq 2|X_1^{\pi} - X_2^{\pi}| + |(Y_2^{\pi} - Y_2^{\pi})\setminus L_d| \\
 & \geq 2(d-1)|X_2^{\pi}| + 2(d-1)|Y_2^{\pi}| - 2d^2 - 2d^2(r-1)  \\
 & \geq 2(d-1) |A| - 2d^2 r,
 \end{align*}
and so, we obtain $Q(d,r)$ in this case.
\par

On the other hand, when $Y_2 = X_1$, that is, when $\dim(Y_2^{\pi}) = d-1$, we may utilise $\eqref{ind1}$ and the hypothesis $Q(d-1,r-1)$ to deduce that
 \begin{align*}
 |(A-A) \setminus L_d| & \geq 2|Y_2^{\pi} - X_2^{\pi}| + |(Y_2^{\pi} - Y_2^{\pi})\setminus L_d| \\
 & \geq 2(d-1)|X_2^{\pi}| + 2|Y_2^{\pi}| + 2(d-2)|Y_2^{\pi}| - 2d^2 - 2d^2(r-1)  \\
 & \geq 2(d-1) |A| - 4d^2r.
 \end{align*}
This finishes our inductive step for $Q(d,r)$, whence, we have shown that $\eqref{out5}$ holds for all $r \geq d \geq 1$.
\par
 
We now proceed to show that given $r > d \geq 4$, proposition $P(d,r)$ holds true if $P(k,r-1)$ holds for all $2 \leq k \leq d$ as well as if $Q(d,r)$ holds true for all $r\geq d$. We begin by claiming that it suffices to consider the case when $Y_2 = X_1$. In order to see this, suppose that $Y_2 \neq X_1$, in which case, we have that $\dim(Y_2) = d-1$ and consequently, $\dim(Y_2^{\pi}) = d$. We may now combine our arguments from before with $P(d,r-1)$ to show that
\begin{align*}
|A-A| & \geq  | X_1^{\pi} - X_2^{\pi}| + | X_2^{\pi} - X_1^{\pi}| + |Y_2^{\pi} - Y_2^{\pi}| \\
        & \geq \bigg(2d - 2 + \frac{1}{d-1}\bigg)|Y_2^{\pi}| - K_{d}(r-1) + 2|X_1^{\pi}|  + 2(d-1)|X_2^{\pi}|  - 2d^2 \\
        & \geq \bigg(2d - 2 + \frac{1}{d-1}\bigg)|A|  + 3|X_1^{\pi}|/2 - K_d r,
\end{align*} 
which is even stronger than the desired bound.
\par

Hence, we now suppose that $Y_2 = X_1$, which allows us to infer that $\dim(Y_2) = d-2$ and consequently, $\dim(Y_2^{\pi}) = d-1$. Thus, by $P(d-1,r-1)$, we have
\begin{equation} \label{ca1}
 |Y_2^{\pi} - Y_2^{\pi}| \geq (2d - 4 + 1/{(d-2)})|Y_2^{\pi}| - K_{d-1} (r-1).
 \end{equation}
We are now interested in estimating $|Y_2^{\pi} - X_2^{\pi}|$. We may translate $X_2^{\pi}$ appropriately to ensure that $X_2^{\pi}$ is contained in the affine span of $Y_2^{\pi}$, whenceforth, applying Theorem $\ref{mn2}$ yields the bound
\begin{equation} \label{urgh23}
 |Y_2^{\pi} - X_2^{\pi}| \geq |Y_2^{\pi}|  + \bigg(d - \frac{1}{r_1 -(d-1) +2} - \frac{1}{r_2 - c + 2}\bigg)|X_2^{\pi}|  - (d-2)(r_1 + r_2), 
 \end{equation}
where $r_1 = \pi_{d}(Y_2^{\pi})$ and $r_2 = \pi_{d}(X_2^{\pi})$ and $c = d-1$ if $r_2 \geq d-1$ and $c=\dim(X_2^{\pi})$ if $r_2 < d-1$. Moreover, we have that $r= r_1 + r_2$. Note that $\eqref{urgh23}$ implies that if $r_1 > d-1$ or $r_2 > c$, then we obtain the bound
\[ |Y_2^{\pi} - X_2^{\pi}| \geq |Y_2^{\pi}|  +  (d-1 + 1/6  )|X_2^{\pi}|  - (d-2)(r_1 + r_2) , \]
which upon combining with $\eqref{ca1}$ gives us
\begin{align*}
|A-A| & \geq  |Y_2^{\pi} - Y_2^{\pi}|  + 2 |Y_2^{\pi} - X_2^{\pi}| \\
& \geq \bigg(2d - 2 + \frac{1}{d-2}\bigg)|Y_2^{\pi}| + \bigg(2d -2 + \frac{1}{3} \bigg)|X_2^{\pi}|  - (d-1)r - K_d (r-1) \\
& \geq \bigg(2d - 2 + \frac{1}{d-1}\bigg)|A| + \frac{|Y_2^{\pi}|}{(d-1)(d-2)} - K_d r \\
& \geq  \bigg(2d - 2 + \frac{1}{d-1}\bigg)|A| + \frac{|A|}{2(d-1)(d-2)} - K_d r .
\end{align*}
Thus, it suffices to investigate the cases when $r_1 = d-1$ and $r_2 = c$, that is, when $r = r_1 + r_2 \leq 2d-2$. 
\par

In the setting when $r \leq 2d-2$, we wield $Q(d,r)$ to deliver the bound
\[ |(A-A) \setminus L_d| \geq (2d-2) |A| - 2d^2 r. \]
Moreover,  we may suppose that, without loss of generality, we have $|l_1 \cap A| \geq |l_i \cap A|$ for each $1 \leq i \leq r$, in which case, we get that
\[ |(A-A) \cap L_d| \geq |(A \cap l_1) - (A \cap l_1)| \geq 2 |A \cap l_1| -1 \geq \frac{2}{r} |A| - 1. \]
Thus, when $r \leq 2(d-1)$, we may combine the above inequality with the preceding lower bound to get that
\begin{align*}
 |A-A| & \geq (2d - 2+ 2/r) |A| - 2d^2r - 1 \\
 & \geq \bigg(2d - 2 + \frac{1}{d-1}\bigg) |A| + \frac{2d-2 - r}{r(d-1)} |A| - K_d r . 
 \end{align*}
Inserting the bound $r \leq 2d-2$ delivers the desired estimate and consequently, we finish our proof of Proposition $\ref{3lines}$. 
\par

In fact, it is worth noting that we prove something stronger than just Theorem $\ref{diff}$ through the above argument, that is, we actually show that $|A-A| \geq (2d - 2 + 1/(d-1) + c_d) |A| - O_d(|A|^{1 - \delta})$ for some small constant $c_d >0$, unless $A$ satisfies some very specific combinatorial properties. We record this in the form of the following theorem. 

\begin{theorem} \label{dbdg}
Let $d \geq 4$ and let $A$ be a finite, non-empty subset of $\mathbb{R}^d$ such that $\dim(A) = d$. Then there exists some absolute constant $\delta >0$ that only depends on $d$ such that either 
\[ |A-A| \geq \bigg(2d - 2  + \frac{1}{d-1} + \frac{1}{(2d-3)(d-1)} \bigg) |A| - O_d(|A|^{1 - \delta}), \]
 or $A$ may be partitioned as $A= A_1 \cup A_2 \cup E$, where $A_1$ and $A_2$ lie in translates of some hyperplane with $\dim(A_1) = \dim(A_2) = d-1$ and $|\pi_{\vec{v}}(A_1)| = |\pi_{\vec{v}}(A_2)| = d-1$ for some $\vec{v} \in \mathbb{R}^d \setminus \{0\}$, and $|E| = O_{d}(|A|^{1- \delta})$.
\end{theorem}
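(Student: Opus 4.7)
The plan is to revisit the proof of Proposition~\ref{3lines}, keeping careful bookkeeping of the slack generated at each branch of the induction, and to identify the unique configuration in which the bound $(2d - 2 + 1/(d-1))|A| - K_d r$ is tight. First I would invoke the reduction from the start of \S3. The complementary case $|A-A| > 8(d-1)|A|$ gives the improved bound trivially, so I may assume $|A-A| \le 8(d-1)|A|$ and use Lemma~\ref{fri} to produce a direction $\vec{v}$ and a subset $S \subseteq A$ supported on $r = O_d(|A|^{1-\sigma})$ translates of $L_{\vec{v}}$ with $|A \setminus S| = O_d(|A|^{1-\sigma})$. I would absorb $A \setminus S$ into the eventual error set $E$ and work exclusively with $S$, strengthening the inductive proposition $P(d,r)$ to assert the dichotomy of Theorem~\ref{dbdg} in its conclusion.

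Next, I would re-run the induction of \S4 under the strengthened hypothesis, tracking the three branches. In Case~1 of the original argument ($Y_2 \neq X_1$), the proof yields an extra term $(3/2)|X_1^{\pi}| - K_d r$ beyond the main term; if $|X_1^{\pi}| \geq c_d |A|$ for a suitable $c_d > 0$, this already supplies the required slack $|A|/((2d-3)(d-1))$, and otherwise the layer $X_1^{\pi}$ is so thin that I would peel it off into $E$ and rerun the induction on $S \setminus X_1^{\pi}$, which now lies on strictly fewer translates of $L_{\vec{v}}$. In Case~2a ($Y_2 = X_1$ with $r_1 > d-1$ or $r_2 > c$), the argument already provides slack $|A|/(2(d-1)(d-2))$, which strictly exceeds $|A|/((2d-3)(d-1))$ for $d \geq 4$. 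In Case~2b ($r_1 = d-1 = r_2 = c$), one has $r \leq 2d-2$, and the inequality $|A-A| \geq (2d-2 + 2/r)|A| - O_d(r)$ coming from $(A-A) \cap L_{\vec{v}}$ supplies slack $2/r - 1/(d-1) \geq 1/((2d-3)(d-1))$ whenever $r \leq 2d-3$.

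The only residual configuration is Case~2b with $r = 2d - 2$, and this forces precisely the claimed structure. Here $S$ lies in two translates of the hyperplane $H' = H_1 + L_{\vec{v}}$; writing $A_1 = Y_2^{\pi}$ and $A_2 = X_2^{\pi}$, each half has $|\pi_{\vec{v}}(A_i)| = d-1$ and $\dim(A_1) = d-1$ is automatic from $\dim(Y_2^{\pi}) = d-1$. To confirm $\dim(A_2) = d-1$, I would argue that if instead $\dim(X_2) < d-2$ then $A_2$ lives in a strictly lower-dimensional affine subspace of its ambient hyperplane translate, and re-applying Theorem~\ref{mn2} in the correspondingly smaller dimension (where $r_2 < d-1$ triggers the branch $c = \dim(B)$) generates additional slack that lands back in a previously improved case. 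Accumulating the errors from the initial reduction and from any Case~1 peeling then produces the decomposition $A = A_1 \cup A_2 \cup E$ with $|E| = O_d(|A|^{1-\delta})$.

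The main obstacle is the iteration in Case~1 when $|X_1^{\pi}|$ is below the threshold $c_d |A|$: one has to show that repeated peeling of thin facet layers terminates within $O_d(r)$ restarts and keeps the cumulative error $O_d(|A|^{1-\delta})$ throughout, while each restart remains within the scope of the strengthened inductive proposition. A secondary subtlety is the degenerate dimension check for $A_2$ in the extremal configuration, which requires extracting the extra slack from Theorem~\ref{mn2} via its $c = \dim(B)$ branch rather than from its stated worst-case form.
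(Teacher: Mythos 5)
Your overall strategy --- re-running the induction behind Proposition~\ref{3lines} while keeping track of the slack generated in each branch --- is exactly what the paper intends (it offers nothing beyond the one-line remark that the theorem follows from ``the above argument''), and your identification of the extremal configuration is correct: the only branch producing no slack proportional to $|A|$ is the final sub-case with $r_1=r_2=d-1$, i.e.\ $r=2d-2$, which is precisely the structured alternative. Your verifications that Case~2a and the sub-case $r\le 2d-3$ already beat the target by the stated margins, and your treatment of the degenerate $\dim(A_2)<d-1$ possibility via the $c=\dim(B)$ branch of Theorem~\ref{mn2}, are also sound.

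The genuine gap is in Case~1, and it is not merely a technical loose end. The structured alternative demands $|E|=O_d(|A|^{1-\delta})$, but your peeling step discards $X_1^{\pi}$ (or, in the recursion actually run in \S4, $X_2^{\pi}$) into $E$ whenever $|X_1^{\pi}|<c_d|A|$ with $c_d\asymp 1/((2d-3)(d-1))$. A single such layer can have any size between $\Theta_d(|A|^{\sigma})$ and $c_d|A|$, so even one peel --- let alone $O_d(r)$ of them --- may contribute a positive proportion of $|A|$ to $E$, which is incompatible with $|E|=O_d(|A|^{1-\delta})$. The same problem appears if one instead applies the strengthened $P(d,r-1)$ to $Y_2^{\pi}$ and lands in its structured branch: the leftover set $X_2^{\pi}$ may satisfy $|A|^{1-\delta}\ll |X_2^{\pi}| \ll_d |A|$, too large to absorb into $E$ yet too small for the cross-term slack $\tfrac32|X_1^{\pi}|$ to reach $|A|/((2d-3)(d-1))$. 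You flag this middle regime as ``the main obstacle'' but supply no mechanism for closing it, and no choice of threshold can avoid it; one would need a genuinely new input, for instance extracting extra slack from $X_2^{\pi}-Y_2^{\pi}$ by exploiting the structure of $Y_2^{\pi}$ (that it lies on $2d-2$ lines inside two hyperplane translates) rather than the bare bound $\eqref{ruzsa4}$. As written, the argument proves only a weaker dichotomy in which $E$ is a small but positive proportion of $A$.
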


Theorem $\ref{dbdg}$ can be interpreted as an inverse theorem for difference sets since it states that any set that has its difference set close to being optimally small satisfies very strict structural conditions, and in particular, apart from $O_{d}(|A|^{1- \delta})$ elements, any such set $A$ must lie on $2d-2$ translates of a $1$-dimensional subspace, which themselves are contained in two translates of a hyperplane.

\section{Linear transformations and Compressions}

For ease of notation, we will use $\pi_{i}$ to denote the projection map $\pi_{e_{i}}: \mathbb{R}^d \to H_{e_{i}}$ for every $1 \leq i \leq d$. We begin by presenting a lemma that implies that for the purposes of proving Theorem $\ref{mn2}$, it suffices to assume that $A$ and $B$ are finite subsets of $\mathbb{Z}^d$ that satisfy suitable properties.

\begin{lemma} \label{realin}
Let $d, r_1, r_2 \in \mathbb{N}$ such that $r_1 \geq d\geq 2$, let $A, B \subset \mathbb{R}^d$ and let $\vec{v} \in \mathbb{R}^d \setminus \{0\}$ satisfy $|\pi_{\vec{v}}(A)| = r_1$ and $| \pi_{\vec{v}}(B)| = r_2$ and $\dim(A+B) = d$ and $|A| \geq |B|$. Then either 
\begin{equation} \label{ur3}
 |A+B| \geq |A| + (d+1)|B| - (d-1)(r_1 + r_2)  ,
 \end{equation}
or there exist sets $A_1,B_1 \subseteq \mathbb{Z}^d$ satisfying the following. 
\begin{enumerate}
\item We have $|A_1| = |A|$ and $|B_1| = |B|$ and $|A+B| = |A_1+B_1|$ and $\dim(B) \geq 1$.
\item Writing $k = \dim(B)$, there exists some natural number $m$ such that $m \cdot \{0, e_1, \dots, e_{d-k} \} \subseteq A_1$ and  $m \cdot \{0, e_{d-k+1}, \dots, e_d\} \subseteq B_1$.
\item We have $|\pi_{d}(A_1)| = r_1$ and $|\pi_{d}(B_1)| = r_2$.
\end{enumerate}
\end{lemma}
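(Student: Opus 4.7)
My plan is to establish the stated dichotomy by separating degenerate cases, in which $\eqref{ur3}$ must be verified directly, from the generic case where we construct $A_1, B_1$ via an invertible linear transformation of $\mathbb{R}^d$ followed by a coordinate-wise Freiman $2$-isomorphism into $\mathbb{Z}^d$.

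First I would handle the degenerate cases. If $\dim(B) = 0$, then $|A+B| = |A|$ and $\eqref{ur3}$ reduces to $(d-1)(r_1+r_2) \geq d+1$, which is immediate from $r_1 \geq d \geq 2$ and $r_2 \geq 1$. The other degenerate case is $r_2 = |B|$: here the target structure would force $0$ and $me_d$ to be two elements of $B_1$ sharing an $L_{e_d}$-translate, giving $|\pi_d(B_1)| < |B_1|$, which contradicts $|\pi_d(B_1)| = r_2 = |B|$. So $\eqref{ur3}$ must be established directly. For $d \geq 3$, Lemma $\ref{1ruzsa}$ combined with the inequality $(d-1)r_1 \geq d(d-1) \geq d(d+1)/2$ delivers it after rearrangement. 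For $d = 2$ with $r_1 \geq 3$, Lemma $\ref{1ruzsa}$ again suffices. The remaining case $d = 2, r_1 = 2$ splits further: for $|A| \geq 4$, Lemma $\ref{gsne2}$ rearranges to $(|A|-4)(|B|-1) \geq 0$; for $|A| \leq 3$, a direct collision-counting argument using $\dim(A+B) = 2$ to rule out too many coincidences handles each subcase.

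In the generic case, with $\dim(B) = k \geq 1$ and $r_2 < |B|$, two elements of $B$ share an $L_{\vec{v}}$-translate, so after translating $B$ we may assume $0 \in B$ and that there exists $b_k \in B \setminus \{0\}$ with $b_k \propto \vec{v}$. I would extend $b_k$ to a basis $\{b_1, \dots, b_k\} \subseteq B$ of $\operatorname{span}(B)$ via Steinitz exchange. Translating $A$ so that $0 \in A$, the inequality $\dim(A+B) \leq \dim(A) + \dim(B)$ together with $\dim(A+B) = d$ yields $\dim(A) \geq d-k$, and a dimension count in $\mathbb{R}^d / \operatorname{span}(B)$ shows that $A$ contains $d-k$ vectors linearly independent modulo $\operatorname{span}(B)$; picking such $a_1, \dots, a_{d-k} \in A$ makes $\{a_1, \dots, a_{d-k}, b_1, \dots, b_k\}$ a basis of $\mathbb{R}^d$. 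Now define the invertible linear map $T: \mathbb{R}^d \to \mathbb{R}^d$ by $T(a_i) = e_i$ and $T(b_j) = e_{d-k+j}$. Since $b_k \propto \vec{v}$, we have $T(\vec{v}) \propto e_d$, so the $\pi_d$-fibers on $T(A), T(B)$ coincide with the $\pi_{\vec{v}}$-fibers on $A, B$, preserving $r_1, r_2$, and $T$ being a bijection that respects addition preserves $|A+B|$.

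To pass from $\mathbb{R}^d$ to $\mathbb{Z}^d$, I apply a coordinate-wise Freiman $2$-isomorphism. For each coordinate $i$, the $i$-th coordinates of $T(A) \cup T(B)$ lie in a finitely generated free abelian subgroup $G_i \subseteq \mathbb{R}$ containing $1$. A standard large-weight construction (assigning rapidly growing integer values $M_j = N^{j}$ to a $\mathbb{Z}$-basis of $G_i$) produces a homomorphism $\phi_i: G_i \to \mathbb{Z}$ that is injective on the finite set of $2$-fold sums of the relevant coordinates. After rescaling each $\phi_i$ so that $\phi_i(1) = m$ for a common $m \in \mathbb{N}$, the coordinate-wise map $\phi$ sends $T(A) \cup T(B)$ into $\mathbb{Z}^d$, with $\phi(e_j) = m e_j$ for each $j$, while preserving both $e_d$-fibers (since each $\phi_i$ is injective) and the Freiman structure. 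Setting $A_1 = \phi(T(A))$ and $B_1 = \phi(T(B))$ then satisfies all the required conditions. The main obstacle is the $d=2, r_1=2, r_2=|B|$ subcase in the degenerate analysis, which requires nontrivial combinatorics; everything else is essentially routine once the transformation framework is in place.
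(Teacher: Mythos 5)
Your proposal is correct and follows essentially the same route as the paper: dispose of the degenerate case $r_2=|B|$ (where the structural conclusion is impossible) by proving \eqref{ur3} directly via Lemma \ref{1ruzsa}, and otherwise use $r_2<|B|$ to place a multiple of $\vec{v}$ inside $B$, normalize by an invertible linear map sending suitable elements of $A$ and $B$ to $e_1,\dots,e_{d-k}$ and $e_{d-k+1},\dots,e_d$, and then transport everything into $\mathbb{Z}^d$ by a Freiman $2$-isomorphism that fixes the $\pi_d$-fibre structure. The only real divergence is in the integer-embedding step, where the paper takes a single $\mathbb{Q}$-linear map on the rational span of $A+B$ (falling back on Ruzsa's inequality if that span has dimension exceeding $d$), while you use coordinate-wise weighted homomorphisms, which avoids that extra case split; your separate treatment of $d=2$, $r_1=2$ in the degenerate case is also warranted, since there Ruzsa's bound alone falls one short of \eqref{ur3}, although the finite check you defer to for $|A|\le 3$ should be written out.
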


\begin{proof}
 We begin by translating $A$ and $B$ to ensure that $0 \in A \cap B$, and since $\dim(A+B) = d$, and furthermore, we may assume that $r_2 \leq |B| -1$, since otherwise, we can use $\eqref{ruzsa4}$ to deduce $\eqref{ur3}$. This implies the existence of $\vec{b}_1, \vec{b}_2 \in B$ and $\lambda \in \mathbb{R} \setminus \{0\}$ that satisfy $\vec{b}_2 - \vec{b}_1 = \lambda  \cdot \vec{v}$. Since our hypothesis remains invariant under dilations of $\vec{v}$ and translations of $B$, we may choose $\vec{b}_1 = 0$ and $\lambda = 1$, whereupon, we get that $\vec{v} \in B$.
\par

Next, we apply an appropriately chosen invertible linear transformation to ensure that
\begin{equation} \label{gsf}
 \vec{v} = e_d \ \text{and} \ \{0, e_1, \dots, e_{d-k}\} \subseteq A \ \text{and} \ \{0, e_{d-k+1}, \dots, e_{d} \} \subseteq B. 
 \end{equation}
We now write
\[ \mathcal{S} = \big\{ \sum_{i=1}^{n} r_i \cdot \vec{x}_i \ | \ n \in \mathbb{N}, \ \vec{x}_1, \dots, \vec{x}_n \in A+B , \ r_1, \dots, r_n \in \mathbb{Q}\big\}\]
to be the span of $A+B$ over the field $\mathbb{Q}$, and we denote $\{\vec{u}_1, \dots, \vec{u}_n\}$ to be the basis of $\mathcal{S}$ over $\mathbb{Q}$. Noting $\eqref{gsf}$, we see that $\{e_1, \dots, e_d\} \subseteq \{\vec{u}_1, \dots, \vec{u}_n\}$, and so, we may assume that $\vec{u}_i = e_i$ for $1 \leq i \leq d$. We define the linear and injective map $\psi : \mathcal{S} \to \mathbb{Q}^n$ to be 
\[ \psi(r_1 \cdot \vec{u}_1 + \dots + r_n \cdot \vec{u}_n) = m \cdot (r_1, \dots, r_n), \]
where $m$ is a suitably chosen natural number such that $\psi(A+B) \subseteq \mathbb{Z}^n$. Since $\psi$ is linear and injective, we have $|A| = |\psi(A)|$ and $|B| = |\psi(B)|$ and $|A+B| = |\psi(A+B)|$, as well as that $m \cdot \{0, e_1, \dots, e_{d-k} \} \subseteq \psi(A) \ \text{and} \ m \cdot \{0, e_{d-k+1}, \dots, e_{d} \} \subseteq \psi(B).$ Moreover, we see that $\dim(\psi(A+B)) = n$, and so, if $n> d$, we can then apply $\eqref{ruzsa4}$ to deduce $\eqref{ur3}$. This means that it suffices to consider the case when $n = d$.
\par

Finally, for any $\vec{a} \in A$, let $\vec{u} \in H_{\vec{v}}$ and $t \in \mathbb{R}$ satisfy $\vec{a} = \vec{u} + t \cdot \vec{v}$. This actually implies that $t = (\vec{a} \cdot \vec{v} )(\vec{v} \cdot \vec{v})^{-1} \in \mathbb{Q}$, which, in turn, gives us that $\vec{u} \in \mathcal{S}$, consequently allowing us to deduce that $\psi(\vec{a}) = t \cdot \psi(\vec{v}) + \psi(\vec{u})$. Putting this together with the fact that $\psi$ is linear and injective on $\mathcal{S}$ delivers the estimate $|\pi_{\psi(\vec{v})}(\psi(A))| = r_1$. One may proceed similarly to show that $|\pi_{\psi(\vec{v})}(\psi(B))| = r_2$, whereupon, we conclude our proof by noting that $\psi(\vec{v}) = \psi(e_d) = e_d$.
\end{proof}

We now present a brief discussion on the technique of compressions, and we do so by importing some notations and definitions from \cite{GG2001}. Writing $E = \mathbb{N} \cup \{0\}$, we define a set $A \subseteq E^d$ to be a \emph{down set} if for each $1 \leq i \leq d$ and for each $(a_1, \dots, a_d) \in A$, we have
\[ \{ (a_1, \dots, a_{i-1}, 0, a_{i+1}, \dots, a_d) + b \cdot e_i \ | \ 0 \leq b \leq a_i -1 \} \subseteq A. \]
In particular, this implies that $([0, a_1] \times \dots \times [0, a_d]) \cap \mathbb{Z}^d \subseteq A$, for each $(a_1, \dots, a_d) \in A$. Given a vector $\vec{v} \in \mathbb{R}^d \setminus \{0\}$, we define $L_{\vec{v}} = \{ \lambda \cdot \vec{v} \ | \ \lambda \in \mathbb{R} \}$.
\par

Next, we write 
\[ \mathcal{W} = \{ \vec{v} \in \mathbb{Z}^d \ | \ v_i = - 1 \ \text{for some} \ 1 \leq i \leq d \ \text{and} \ v_j \geq 0 \ \text{for} \ j \neq i, \ 1 \leq j \leq d \}. \]
If $\vec{v} \in \mathcal{W}$, let $\mathbb{Z}(\vec{v}) = \{ \vec{x} \in E^d \ | \ \vec{x} + \vec{v} \notin E^d \}$. Suppose that $A$ is a finite subset of $E^d$ and $\vec{x} \in \mathbb{Z}(\vec{v})$, then the $\vec{v}$-section of $A$ at $\vec{x}$ is 
\[ A_{\vec{v}}(\vec{x}) = \{ m \in \mathbb{N} \ | \ \vec{x} - m \cdot \vec{v} \in A\}. \]
Finally, we define the \emph{$\vec{v}$-compression} $\mathcal{C}_{\vec{v}}(A)$ of the set $A$ to be the unique set in $E^d$ satisfying
\[ (\mathcal{C}_{\vec{v}}(A))_{\vec{v}}(\vec{x}) = \{ 0, 1, \dots, |A_{\vec{v}}(\vec{x}) | - 1\}, \]
for each $\vec{x} \in \mathbb{Z}(\vec{v})$. The set $A$ is called $\vec{v}$-compressed if $\mathcal{C}_{\vec{v}}(A) = A$.
\par

We now record \cite[Corollary $3.5$]{GG2001} that implies that cardinalities of sumsets do not increase when the individual sets are compressed in suitable directions.

\begin{lemma} \label{gardc}
Let $A$ and $B$ be finite subsets of $E^d$ and let $\vec{v} \in \mathcal{W}$. Then we have
\[ |A+B| \geq |\mathcal{C}_{\vec{v}}(A) + \mathcal{C}_{\vec{v}}(B)|. \]
\end{lemma}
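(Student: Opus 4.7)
The plan is to reduce the multidimensional inequality to a family of one-dimensional sumset bounds by slicing $E^d$ into maximal discrete rays parallel to $\vec{v}$. Let $i$ be the coordinate with $v_i = -1$; then $\mathbb{Z}(\vec{v})$ is exactly the set of points of $E^d$ whose $i$-th coordinate vanishes, and every element of $E^d$ lies on a unique ray $\{\vec{x} - m \cdot \vec{v} : m \geq 0\} \cap E^d$ with $\vec{x} \in \mathbb{Z}(\vec{v})$. A set $A \subseteq E^d$ is then determined by its sections $A_{\vec{v}}(\vec{x}) \subseteq \mathbb{N} \cup \{0\}$ indexed by $\vec{x} \in \mathbb{Z}(\vec{v})$, and by definition the compression $\mathcal{C}_{\vec{v}}(A)$ preserves each cardinality $|A_{\vec{v}}(\vec{x})|$ while replacing the section with the initial segment $\{0, 1, \dots, |A_{\vec{v}}(\vec{x})| - 1\}$.

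Next I would analyse sumset sections. Because $\mathbb{Z}(\vec{v})$ is cut out by the linear condition ``$i$-th coordinate zero'', it is closed under addition, and a direct unwinding of the definitions shows
\[ (A+B)_{\vec{v}}(\vec{x}) = \bigcup_{\substack{\vec{y}, \vec{z} \in \mathbb{Z}(\vec{v}) \\ \vec{y} + \vec{z} = \vec{x}}} \bigl(A_{\vec{v}}(\vec{y}) + B_{\vec{v}}(\vec{z})\bigr), \]
with the identical identity valid for $\mathcal{C}_{\vec{v}}(A)$ and $\mathcal{C}_{\vec{v}}(B)$ in place of $A$ and $B$. For the compressed sets each summand is a sum of two initial segments of $\mathbb{N} \cup \{0\}$, hence itself the initial segment of length $|A_{\vec{v}}(\vec{y})| + |B_{\vec{v}}(\vec{z})| - 1$ whenever both sections are non-empty; since a union of initial segments anchored at $0$ is simply the longest of them,
\[ |(\mathcal{C}_{\vec{v}}(A) + \mathcal{C}_{\vec{v}}(B))_{\vec{v}}(\vec{x})| = \max_{\vec{y} + \vec{z} = \vec{x}} \bigl(|A_{\vec{v}}(\vec{y})| + |B_{\vec{v}}(\vec{z})|\bigr) - 1, \]
where the maximum is over decompositions with both sections non-empty.

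For the uncompressed sets I would apply the classical one-dimensional bound $|S+T| \geq |S| + |T| - 1$ for non-empty finite $S, T \subseteq \mathbb{Z}$ to a single summand realising the maximum above. This yields $|(A+B)_{\vec{v}}(\vec{x})| \geq \max_{\vec{y}+\vec{z}=\vec{x}} (|A_{\vec{v}}(\vec{y})| + |B_{\vec{v}}(\vec{z})|) - 1$, and hence $|(A+B)_{\vec{v}}(\vec{x})| \geq |(\mathcal{C}_{\vec{v}}(A) + \mathcal{C}_{\vec{v}}(B))_{\vec{v}}(\vec{x})|$ for every $\vec{x}$. Summing over $\vec{x} \in \mathbb{Z}(\vec{v})$, which partitions both sumsets into disjoint sections, delivers the desired inequality.

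The only point requiring any care is the fiber identity for sumset sections: one must verify that every representation $\vec{x} - m\vec{v} = \vec{a} + \vec{b}$ with $\vec{a} \in A$, $\vec{b} \in B$ canonically splits as $\vec{a} = \vec{y} - m_1 \vec{v}$, $\vec{b} = \vec{z} - m_2 \vec{v}$ with $\vec{y}, \vec{z} \in \mathbb{Z}(\vec{v})$, $\vec{y} + \vec{z} = \vec{x}$ and $m_1 + m_2 = m$. This follows by taking $\vec{y}, \vec{z}$ to have their $i$-th coordinates set to $0$ and matching the remaining components; the sign hypothesis on $\vec{v}$ ($v_i = -1$, $v_j \geq 0$ otherwise) ensures the resulting $\vec{y}, \vec{z}$ remain inside $E^d$, so no boundary issues arise.
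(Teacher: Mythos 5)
Your argument is correct and complete: the fiber identity for $\vec{v}$-sections of the sumset, the observation that compressed sections sum to initial segments whose union is the longest one, and the one-dimensional bound $|S+T|\geq|S|+|T|-1$ on a maximising fiber together give the section-by-section comparison, and summing over $\vec{x}\in\mathbb{Z}(\vec{v})$ finishes the proof. The paper does not prove this lemma itself (it is quoted as Corollary 3.5 of Gardner--Gronchi), and your proof is essentially the standard argument given there.
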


We also present the following lemma that records the fact that certain compressions preserve certain kind of structures.

\begin{lemma} \label{random}
Let $m \in \mathbb{N}$, let $A$ be a finite subset of $E^d$ and let $\vec{u} \in \{-e_1, \dots, -e_d\} \subseteq \mathcal{W}$, and let $\vec{v}_1, \dots, \vec{v}_m \in H_{\vec{u}} \cap \mathcal{W}$. Then
\begin{equation} \label{comp3}
  |\pi_{\vec{u}}(A)| =  |\pi_{\vec{u}}(\mathcal{C}_{\vec{u}} (A))| = |\pi_{\vec{u}}(  \mathcal{C}_{\vec{v}_m}( \dots  \mathcal{C}_{\vec{v}_1}(\mathcal{C}_{\vec{u}} (A)) \dots ))|.
  \end{equation}
Moreover, if $k, m \in \mathbb{N}$ and $\vec{t} \in E^d$ satisfy $m \cdot \{0, e_1, \dots, e_k\} + \vec{t} \subseteq A$, then we have
\begin{equation} \label{comp2}
\{0, e_1, \dots, e_k\} \subseteq \mathcal{C}_{-\vec{e}_k}( \dots  \mathcal{C}_{-\vec{e}_1}(A) \dots ). \end{equation}
\end{lemma}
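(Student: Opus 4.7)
The plan is to verify both displays by short inductions directly from the definition of $\mathcal{C}_{\vec{v}}$, based on one elementary monotonicity principle: if $X \subseteq Y$ are finite subsets of $E^d$ and $\vec{v} \in \mathcal{W}$, then $\mathcal{C}_{\vec{v}}(X) \subseteq \mathcal{C}_{\vec{v}}(Y)$. This is immediate from $|X_{\vec{v}}(\vec{x})| \leq |Y_{\vec{v}}(\vec{x})|$ for every $\vec{x} \in \mathbb{Z}(\vec{v})$ combined with the definition of compression as setting the $\vec{v}$-fiber equal to the initial segment $\{0, 1, \dots, |\cdot| - 1\}$.

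For \eqref{comp3}, the first equality is immediate since $(\mathcal{C}_{\vec{u}}(A))_{\vec{u}}(\vec{x})$ is non-empty if and only if $A_{\vec{u}}(\vec{x})$ is, so $\pi_{\vec{u}}(A) = \pi_{\vec{u}}(\mathcal{C}_{\vec{u}}(A))$ as sets. For the second equality, write $\vec{u} = -e_i$ and put $B = \mathcal{C}_{\vec{u}}(A)$. Since $B$ is $\vec{u}$-compressed, each $\vec{u}$-fiber of $B$ is an initial segment in the $e_i$-direction, so the horizontal slices $B(j) := \{\vec{x} \in B : x_i = j\}$ project down to a descending chain $\pi_{\vec{u}}(B(0)) \supseteq \pi_{\vec{u}}(B(1)) \supseteq \cdots$ with $\pi_{\vec{u}}(B) = \pi_{\vec{u}}(B(0))$. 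Any $\vec{v}_\ell \in H_{\vec{u}} \cap \mathcal{W}$ has zero $i$-th coordinate, so its $\vec{v}_\ell$-fibers live inside single horizontal slices, and consequently $\mathcal{C}_{\vec{v}_\ell}$ acts on each slice independently while preserving its cardinality. By the monotonicity principle the descending-chain structure is preserved slice-by-slice, so the resulting set is still $\vec{u}$-compressed with bottom slice of unchanged cardinality. Iterating over $\vec{v}_1, \dots, \vec{v}_m$ completes the argument.

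For \eqref{comp2}, I focus on the case $\vec{t} = 0$, which is the form in which the lemma is applied (via Lemma \ref{realin}). I induct on $j = 0, 1, \dots, k$ to show that after applying $\mathcal{C}_{-e_j} \circ \cdots \circ \mathcal{C}_{-e_1}$ to $A$, the resulting set contains
\[ \{ 0,\; e_1, \dots, e_j,\; m e_{j+1}, \dots, m e_k \}. \]
The base case $j = 0$ is the hypothesis. For the inductive step, inside the set produced by the first $j$ compressions the two points $0$ and $m e_{j+1}$ sit in a common $-e_{j+1}$-column (they agree in every coordinate except the $(j+1)$-th), so that column has size at least $2$, and after $\mathcal{C}_{-e_{j+1}}$ the two lattice points $0$ and $e_{j+1}$ appear. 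The remaining tracked points $e_1, \dots, e_j$ and $m e_{j+2}, \dots, m e_k$ already have $(j+1)$-th coordinate zero, so each lies at the bottom of its own $-e_{j+1}$-column of size at least $1$ and therefore survives the compression unchanged. Setting $j = k$ gives the claim.

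The only real subtlety is bookkeeping the effect of each successive compression on the coordinates of the previously secured points; this stays clean because at stage $j+1$ the new compression direction $e_{j+1}$ is orthogonal to all coordinate directions whose compressions have already been applied, so the existing zero coordinates are not disturbed.
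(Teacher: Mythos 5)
Your treatment of \eqref{comp3} is correct and is essentially the paper's own argument written out in full: the paper's proof consists of the observations that $\pi_{\vec{u}}(\mathcal{C}_{\vec{u}}(A)) = \mathcal{C}_{\vec{u}}(A) \cap H_{\vec{u}}$ and that compressions in directions lying in $H_{\vec{u}}$ act slice by slice and preserve cardinalities; your monotonicity principle is exactly what is needed to see that the bottom slice remains the full projection (and that the set stays $\vec{u}$-compressed) after each subsequent compression.

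The gap is in \eqref{comp2}. Restricting to $\vec{t}=0$ is not justified by the applications: in Lemmas \ref{cmp1} and \ref{tmb} the lemma is invoked for the set $A+\vec{t}$, where $\vec{t}$ is a genuinely nonzero translate chosen to push the sets into $E^d$, so the hypothesis really is $m\cdot\{0,e_1,\dots,e_k\}+\vec{t}\subseteq A$ with $\vec{t}\neq 0$. Your induction does extend verbatim to general $\vec{t}$, but it then proves only that $\{0,e_1,\dots,e_k\}+\vec{t}'$ is contained in $\mathcal{C}_{-e_k}(\dots\mathcal{C}_{-e_1}(A)\dots)$, where $\vec{t}'$ is $\vec{t}$ with its first $k$ coordinates set to zero: each $\mathcal{C}_{-e_j}$ pushes the tracked points to the bottoms of their $e_j$-columns, killing the $j$-th coordinate of the translate but leaving coordinates $k+1,\dots,d$ untouched. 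Indeed, the containment as literally stated fails for general $\vec{t}$ (take $d=2$, $k=1$, $A=\{(0,1),(m,1)\}$; then $\mathcal{C}_{-e_1}(A)=\{(0,1),(1,1)\}\not\ni 0$). What rescues the applications is that there one always compresses along \emph{all} coordinate directions, and the extra compressions $\mathcal{C}_{-e_j}$ for $j>k$ annihilate the residual translate $\vec{t}'$ by the same ``bottom of a nonempty column'' observation you already use for the points $e_1,\dots,e_j$ in your inductive step. So you should carry the translate through the induction and add this last step, rather than set $\vec{t}=0$; as written your argument does not deliver the statement in the form in which it is used. (The paper itself only gestures at \eqref{comp2} by citing Gardner--Gronchi, so an explicit induction is welcome; it just needs to track $\vec{t}$.)
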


\begin{proof}
By definition of $\mathcal{C}_{\vec{u}}$, it follows that $|\pi_{\vec{u}}(A)| = |\pi_{\vec{u}}(\mathcal{C}_{\vec{u}} (A))|$. Combining this with the facts that $\pi_{\vec{u}}(\mathcal{C}_{\vec{u}}(A)) = \mathcal{C}_{\vec{u}}(A) \cap H_{\vec{u}}$ and that for any $\vec{v} \in H_{\vec{u}} \cap \mathcal{W}$ and for any finite $X \subseteq H_{\vec{u}}$ we have $|\mathcal{C}_{\vec{v}}(X)| = |X|$, allows us to deduce $\eqref{comp3}$. One may similarly prove that $\eqref{comp2}$ holds, for instance, this is mentioned in the proof of \cite[Corollary $2.6$]{GG2001}.
\end{proof}

We will use Lemma $\ref{random}$ throughout the proofs of the next two lemmas, which themselves focus on reducing our proof to the case when $A$ and $B$ are down sets in $E^d$ satisfying suitable arithmetic properties. 

\begin{lemma} \label{cmp1}
Let $r_1, r_2, k, m \in \mathbb{N}$ and let $A$, $B$ be  finite subsets of $\mathbb{Z}^d$ such that $|\pi_{d}(A)| = r_1$ and $|\pi_{d}(B)| = r_2$ and $\dim(B) = k$. Moreover, suppose that
\[ m \cdot \{0, e_1, \dots, e_{d-k}\} \subseteq A \ \text{and} \ m \cdot \{0,e_{d-k+1}, \dots, e_d\} \subseteq B. \]
Then there exist down sets $A'$ and $B'$ satisfying the following conditions
\begin{enumerate}
 \item We have $\dim(A'+B') = d$ and $|A'| = |A|$ and $|B'| = |B|$, and $|A+B| \geq |A' + B'|.$ 
 \item Writing $U' = \pi_{d}(A')$ and $V' = \pi_{d}(B')$, we have that $|U'| = r_1$ and $|V'| = r_2$. 
 \item We either have $U' \subseteq \mathbb{Z}^{d-2} \times \{0\}$ and $V' \setminus (\mathbb{Z}^{d-2} \times \{0\}) = \{e_{d-1}\}$, or $V' \subseteq \mathbb{Z}^{d-2} \times \{0\}$ and $U' \setminus (\mathbb{Z}^{d-2} \times \{0\}) = \{e_{d-1}\}$, or $U' \setminus (\mathbb{Z}^{d-2} \times \{0\}) =  \{e_{d-1}\}$ and $U'= V'$.
 \end{enumerate}
\end{lemma}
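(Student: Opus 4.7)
My plan is to use compressions in $\mathcal{W}$ to reduce $A, B \subseteq \mathbb{Z}^d$ to structured down sets $A', B' \subseteq E^d$ satisfying conditions (1)--(3), relying throughout on Lemmas~\ref{gardc} and~\ref{random}. First, I would translate $A$ and $B$ by integer vectors so that both lie in $E^d$; translation preserves $|A|, |B|, |A+B|, \dim(B)$, and the projection sizes $r_1 = |\pi_d(A)|, r_2 = |\pi_d(B)|$, and the patterns $m \cdot \{0, e_1, \dots, e_{d-k}\} \subseteq A$ and $m \cdot \{0, e_{d-k+1}, \dots, e_d\} \subseteq B$ become translated patterns sitting inside $E^d$, the natural setting for compressions.

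Next, I would apply $\mathcal{C}_{-e_d}$ to both sets and then cycle repeatedly through the compressions $\mathcal{C}_{-e_i}$ for $1 \leq i \leq d-1$. Since each $-e_i$ for $i < d$ lies in $H_{-e_d} \cap \mathcal{W}$, equation \eqref{comp3} of Lemma~\ref{random} preserves $r_1, r_2$ at every step, while Lemma~\ref{gardc} ensures the sumset can only shrink. A standard monovariant (for instance $\sum_{\vec{x}} \|\vec{x}\|^2$ strictly decreases under any nontrivial compression) shows the procedure terminates at down sets $A_1, B_1$. Equation \eqref{comp2} then forces $\{0, e_1, \dots, e_{d-k}\} \subseteq A_1$ and $\{0, e_{d-k+1}, \dots, e_d\} \subseteq B_1$, giving $\dim(A_1 + B_1) = d$, so conditions (1) and (2) are secured for $(A_1, B_1)$.

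The remaining task is to enforce condition (3). I would further apply compressions that push mass off the $e_{d-1}$ axis within each fiber of $\pi_d$, namely the diagonal compressions $\mathcal{C}_{\vec{w}_j}$ in directions $\vec{w}_j = e_j - e_{d-1}$ for $1 \leq j \leq d-2$. Each $\vec{w}_j$ lies in $H_{-e_d} \cap \mathcal{W}$, so Lemma~\ref{random} continues to preserve $r_1, r_2$ and Lemma~\ref{gardc} continues to control the sumset. Interleaving these diagonal compressions with further $\mathcal{C}_{-e_i}$ passes to restore the down-set property, and terminating via the same monovariant, yields down sets $A', B'$ which are simultaneously down sets and $\vec{w}_j$-compressed for every $j \leq d-2$; these extra compressedness constraints are what force the projections $U' := \pi_d(A')$ and $V' := \pi_d(B')$ to become essentially flat in the $e_{d-1}$ direction.

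The main obstacle is then to verify condition (3) \emph{exactly} --- that $U'$ and $V'$ have the precise form (contained in $\mathbb{Z}^{d-2} \times \{0\}$ up to possibly the single point $e_{d-1}$), and that the fixed point falls into exactly one of the three advertised cases. In particular, case~3's assertion that $U' = V'$ requires a coupling argument beyond the individual compressions; I expect this will need either a joint symmetrizing compression applied to the pair $(A', B')$ simultaneously, or a careful case analysis on the value of $k$ --- using that the forced patterns pin $e_{d-1}$ in $A_1$ only when $k = 1$ and in $B_1$ only when $k \geq 2$ --- to allocate each scenario to its correct case. Making this trichotomy clean, and in particular separating case~3 from the vaguer ``both projections contain $e_{d-1}$'' situation, is where the core technical effort of the proof will concentrate.
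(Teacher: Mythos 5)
Your overall framework—translating into $E^d$, applying the coordinate compressions $\mathcal{C}_{-e_d},\mathcal{C}_{-e_1},\dots$ to produce down sets, and using Lemmas \ref{gardc} and \ref{random} to control $|A+B|$, the projection sizes and the forced patterns—is exactly the paper's, and it does secure conditions (1) and (2). The gap is in condition (3): compressing in the \emph{fixed} diagonal directions $\vec{w}_j = e_j - e_{d-1}$ does not flatten the projections. For instance, in $\mathbb{R}^3$ the set $T=\{(i,j,0): i,j\geq 0,\ i+j\leq 2\}$ is a down set that is $(e_1-e_2)$-compressed, yet $\pi_3(T)$ contains $(1,1)$ and $(0,2)$, so $T$ is a fixed point of every compression on your list while violating condition (3). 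Hence your iteration can terminate at a configuration that fails the conclusion. (As a side remark, your proposed monovariant $\sum_{\vec{x}}\|\vec{x}\|^2$ need not decrease under a diagonal compression, e.g.\ moving $(4,1,0)$ to $(5,0,0)$ along $e_1-e_2$ increases it; the paper sidesteps termination issues by performing a fixed finite sequence of compressions.) The missing idea is that the later compression directions must be chosen \emph{from the data}: the paper takes $\vec{y}$ to be the point of the base of $U_1\cup V_1$ maximising $\|\vec{y}-e_{d-1}\|$ and compresses in the direction $\vec{w}=\vec{y}-e_{d-1}$, whose entries are typically large; one such round (followed by restoring the down-set property) confines $U\cup V$ to $\mathbb{Z}^{d-2}\times\{0,1\}$, and a second round to $(\mathbb{Z}^{d-2}\times\{0\})\cup\{e_{d-1}\}$.

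The trichotomy, which you explicitly leave unresolved, is then settled by one more compression of the same extremal type rather than by a joint symmetrisation or a case analysis on $k$: if $e_{d-1}$ lies in both projections and they differ, the paper picks $\vec{x}$ of maximal norm in the symmetric difference $(U_4\setminus V_4)\cup(V_4\setminus U_4)$ and compresses both sets in the direction $\vec{x}-e_{d-1}$, which forces one of the two projections entirely into $\mathbb{Z}^{d-2}\times\{0\}$; if the symmetric difference is empty one is already in the third case $U'=V'$. So the core device you are missing is the selection of compression directions from extremal points of the current configuration; with only fixed directions the argument cannot be completed.
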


\begin{proof} 
Let $\vec{t} \in E^d$ satisfy the condition that both $A + \vec{t}$ and $B+ \vec{t}$ are subsets of $E^d$, and set
\[ A_ 1= \mathcal{C}_{-e_{d-1}} ( \dots \mathcal{C}_{-e_1}(\mathcal{C}_{-e_d}(A+ \vec{t}))\dots) \ \text{and} \ B_1= \mathcal{C}_{-e_{d-1}}( \dots \mathcal{C}_{-e_1}(\mathcal{C}_{-e_d}(B+ \vec{t}))\dots). \]
Noting Lemma $\ref{random}$, we see that $|\pi_d(A_1)| = |\pi_d(A)|$ and $|\pi_d(B_1)| = |\pi_d(B)|$ as well as that $\dim(A_1 + B_1) = d$. Writing $U_1 = \pi_d(A_1)$ and let $V_1 = \pi_d(B_1)$, we see that $e_{d-1} \in U_1 \cup V_1$. If $e_{d-1} \in U_1$, define $X_1 = U_1 \cup (\mathbb{Z}^{d-1} \times \{0\}$, otherwise, define $X_1 = \emptyset$. Similarly, if $e_{d-1} \in V_1$, define $Y_1 = V_1 \cup (\mathbb{Z}^{d-1} \times \{0\})$, else, let $Y_1 = \emptyset$. Since $e_{d-1} \in U_1 \cup V_1$, we have that $X_1 \cup Y_1 \neq \emptyset$. Let $\vec{y} \in X_1 \cup Y_1$ be the element that maximises $\n{\vec{y} - e_{d-1}}_{d-1}$. Setting $\vec{w} = \vec{y} - e_{d-1}$, we compress $A_1$ and $B_1$ in the direction of $\vec{w}$, that is, we set 
\[A_2 = \mathcal{C}_{-e_{d-2}} ( \dots (\mathcal{C}_{-e_1}(\mathcal{C}_{\vec{w}}(A_1)))) \ \text{and} \ B_2 = \mathcal{C}_{-e_{d-2}} ( \dots (\mathcal{C}_{-e_1}(\mathcal{C}_{\vec{w}}(B_1)))). \]
Note that our choice of $X_1$ and $Y_1$ implies that $e_{d-1} \in A_2 \cup B_2$, and so, we may deduce that $\dim(A_2 + B_2) = d$, and furthermore, we have $|\pi_d(A_2)| = |\pi_d(A_1)|$ and $|\pi_d(B_2)| = |\pi_d(B_1)|$. Finally, set $A_3 = \mathcal{C}_{-e_d}(A_2)$ and $B_3 =\mathcal{C}_{-e_d}(B_2)$, and write $U_3 = \pi_d(A_3)$ and $V_3 = \pi_d(B_3)$.
 \par
 
Our choice of $\vec{w}$ implies that $U_3 \cup V_3  \subseteq \mathbb{Z}^{d-2} \times \{0,1\}$ as well as that $e_{d-1} \in U_3 \cup V_3$. Thus, defining $X_3$ and $Y_3$ analogously to $X_1$ and $Y_1$, we repeat the above procedure again by finding $\vec{y}' \in X_3 \cup Y_3$ such that $\n{\vec{y}' - e_{d-1}}_{d-1}$ is maximised. We now compress in the directions of $\vec{y}' - e_{d-1}, - e_1, \dots, -e_{d-2}, -e_d$ in that order to obtain sets $A_4$ and $B_4$. This time, writing $U_4 =\pi_{d}(A_4)$ and $V_4 = \pi_{d}(B_4)$, we see that our choice of $\vec{w}'$ indicates that $(U_4 \cup V_4) \setminus (\mathbb{Z}^{d-2} \times \{0\}) = \{e_{d-1}\}$.
\par

 If $e_{d-1} \in U_4 \setminus V_4$, then we have $V_4 \subseteq \mathbb{Z}^{d-2} \times \{0\}$, and so we are done. Similarly, if $e_{d-1} \in V_4 \setminus U_4$, we are done as well, so we may assume that $e_{d-1} \in V_4 \cap U_4$. Defining $W =  (U_4 \setminus V_4) \cup (V_4 \setminus U_4)$, we see that it suffices to consider the case when $W$ is non-empty, since otherwise we would have $U_4 = V_4$ and we would be done. Thus, let $\vec{x}$ be an element of $W$ such that $\n{\vec{x}}_{d-1}$ is maximal. In this case, we may write
$\vec{w}'' = \vec{x} - e_{d-1}$ and set
\[ A_5 = \mathcal{C}_{-e_d}(\mathcal{C}_{-e_{d-2}} ( \dots (\mathcal{C}_{-e_1}(\mathcal{C}_{\vec{w}''}(A_4))))) \ \text{and} \ B_5 = \mathcal{C}_{-e_d}(\mathcal{C}_{-e_{d-2}} ( \dots (\mathcal{C}_{-e_1}(\mathcal{C}_{\vec{w}''}(B_4))))). \]
Defining $U_5  = \pi_d(A_5)$ and $V_5 = \pi_d(B_5)$, we see that by our choice of $\vec{x}$, we must have either $U_5 \subseteq \mathbb{Z}^{d-2} \times \{0\}$ or $V_5 \subseteq \mathbb{Z}^{d-2} \times \{0\}$, and consequently, we finish the proof of our lemma. 
\end{proof}

\begin{lemma} \label{tmb}
Let $r_1, r_2, d, k \in \mathbb{N}$ satisfy $r_1 \geq d> r_2 \geq k$ and $d \geq 2$. Let $A,B$ be finite subsets of $\mathbb{Z}^d$ such that $|A| \geq |B| \geq 2$ and $\dim(A+B) = d$ and $\dim(B) = k$. Furthermore, let $\vec{v} \in B$ satisfy $|\pi_{\vec{v}}(A)| = r_1$ and $|\pi_{\vec{v}}(B)| = r_2$. Then there exist down sets $A',B'$ satisfying the following. 
\begin{enumerate}
 \item We have $|A'| = |A|$ and $|B'| = |B|$ and $\dim(B') = k$ and $|A+B| \geq |A'+B'|$. 
 \item We have $\{e_1, \dots, e_k\} \subseteq B' \subseteq \mathbb{Z}^k \times \{0\}^{d-k}$ and $A' \setminus (\mathbb{Z}^k \times \{0\}^{d-k}) = \{ e_{k+1}, \dots, e_{d}\}$.
  \item We have $|\pi_{1}(A')| = r_1$ and $|\pi_{1}(B')| = r_2$. 
 \end{enumerate}
\end{lemma}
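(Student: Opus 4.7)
The plan is to combine a linear normalization in the spirit of Lemma~\ref{realin} with a staged compression argument analogous to Lemma~\ref{cmp1}. After translating so that $0 \in A \cap B$, and using $\vec{v} \in B$ (which is necessarily nonzero) together with $\dim B = k$ and $\dim(A+B) = d$, one may choose a basis $\{\vec{v}, \vec{b}_2, \dots, \vec{b}_k\}$ of the linear span of $B$ with $\vec{b}_i \in B$, and extend by $\vec{a}_{k+1}, \dots, \vec{a}_d \in A$ to a basis of the linear span of $A \cup B$. Applying the invertible $\mathbb{Q}$-linear map sending these vectors to $e_1, \dots, e_d$ and clearing denominators by a positive integer $m$ reduces to $A, B \subseteq \mathbb{Z}^d$ with $B \subseteq \mathbb{Z}^k \times \{0\}^{d-k}$, $m \cdot \{0, e_1, \dots, e_k\} \subseteq B$, $m \cdot \{0, e_{k+1}, \dots, e_d\} \subseteq A$, and $|A+B|$, $|\pi_1(A)|$, $|\pi_1(B)|$ all preserved, exactly as in the final paragraph of the proof of Lemma~\ref{realin}. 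A further translation places both sets in $E^d$.

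I would next apply the compressions $\mathcal{C}_{-e_1}, \mathcal{C}_{-e_2}, \dots, \mathcal{C}_{-e_d}$ in turn to both $A$ and $B$: Lemma~\ref{gardc} bounds the sumset, Lemma~\ref{random}~\eqref{comp3} preserves $|\pi_1|$ (the subsequent directions $-e_2, \dots, -e_d$ lie in $H_{e_1} \cap \mathcal{W}$), Lemma~\ref{random}~\eqref{comp2} retains the basis-vector inclusions, and $B \subseteq \mathbb{Z}^k \times \{0\}^{d-k}$ persists since $\mathcal{C}_{-e_i}$ acts trivially on $B$ for $i > k$. At this stage both sets are down sets. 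I would then iteratively apply the diagonal compressions $\mathcal{C}_{-e_i + e_j}$ with $i > k$ and $j \in \{2, \dots, k\}$, which lie in $H_{e_1} \cap \mathcal{W}$ and thus also preserve $|\pi_1|$; each such $\vec{w}$ fixes $B$ pointwise because $B \subseteq \{x_i = 0\}$ forces every $\vec{w}$-fiber on $B$ to consist of at most one element. Introducing the potential $\Phi(A) := \sum_{a \in A}\sum_{i > k} a_i$, the elementary inequality $m_1 + \cdots + m_N \geq 0 + 1 + \cdots + (N-1)$ for any strictly increasing nonnegative integers, applied to the $e_i$-coordinate sums along each fiber and combined with the fact that the compensating shift into the $e_j$-coordinate with $j \leq k$ does not contribute to $\Phi$, shows that every non-trivial such compression strictly decreases $\Phi$.

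Since $\Phi$ is a non-negative integer the iteration halts, producing down sets $A', B'$ that satisfy conditions $(1)$ and $(3)$ of the lemma together with the inclusion $\{e_1, \dots, e_k\} \subseteq B' \subseteq \mathbb{Z}^k \times \{0\}^{d-k}$. The crux is then to show $A' \setminus (\mathbb{Z}^k \times \{0\}^{d-k}) = \{e_{k+1}, \dots, e_d\}$, equivalently $\Phi(A') = d - k$, and this is the principal obstacle. Assuming some $a \in A' \setminus \{e_{k+1}, \dots, e_d\}$ satisfies $\sum_{i > k} a_i \geq 1$, I would split into cases: (i) $a_i \geq 2$ for some $i > k$; (ii) $a_i, a_{i'} \geq 1$ for two distinct $i, i' > k$; (iii) $a_i \geq 1$ and $a_j \geq 1$ for some $i > k$ and some $j \leq k$. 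In each case, invoking the compressed-fiber condition in direction $-e_i + e_\ell$ for a carefully chosen $\ell \in \{2, \dots, k\}$, combined with the down-set property of $A'$ and the inclusions $\{e_1, \dots, e_k\} \subseteq A'$, I would exhibit a further $\Phi$-reducing move, contradicting stability. Certain compressed-stable configurations (for instance a full fiber $\{2 e_\ell, e_\ell + e_i, 2 e_i\} \subseteq A'$) and the boundary case $k = 1$, where no diagonal compression is available, would instead require a direct clean-up modification of $A'$---relocating the offending elements into $\mathbb{Z}^k \times \{0\}^{d-k}$ while directly bounding the change in $|A' + B'|$---analogous to the concluding step in the proof of Lemma~\ref{cmp1}.
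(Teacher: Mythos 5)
Your normalization and the first round of coordinate compressions match the paper's strategy, but the crux of the lemma --- forcing $A' \setminus (\mathbb{Z}^k \times \{0\}^{d-k})$ to equal exactly $\{e_{k+1},\dots,e_d\}$ --- is where the argument has a genuine gap, and you have in effect flagged it yourself. Iterating the \emph{fixed} diagonal compressions $\mathcal{C}_{-e_i+e_j}$ until your potential $\Phi$ stabilizes only guarantees that no single such compression acts nontrivially; it does not force $\Phi(A')=d-k$. Your own configuration $\{2e_\ell,\, e_\ell+e_i,\, 2e_i\}$ is a complete fiber in the relevant direction, hence already compressed, yet it strands $2e_i$ outside the target hyperplane. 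The fallback you offer --- relocating the offending points by hand ``while directly bounding the change in $|A'+B'|$'' --- is not a compression, so Lemma \ref{gardc} gives no control over it, and an arbitrary relocation can perfectly well increase the sumset; no inequality in the direction $|A+B|\geq|A'+B'|$ is available for such a move. (Contrary to your closing remark, the proof of Lemma \ref{cmp1} contains no ad hoc relocation step either; it too proceeds entirely by compressions.)

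The paper closes this gap by choosing the diagonal direction \emph{adaptively} rather than fixing it in advance: at each stage one takes $\vec{w}$ to be the element of $A\cap(\{0\}\times\mathbb{Z}^{d-2}\times\{0\})$ (and, in a later round, of $A\cap(\mathbb{Z}\times\{0\}^{d-1})$) maximizing $\n{e_d-\vec{w}}_{d}$, and compresses in the single direction $\vec{w}-e_d$, restoring the down-set property with coordinate compressions afterwards. Because $\vec{w}$ is extremal, every fiber in that direction is long enough that the compression pushes all of $A\cap(\{0\}\times\mathbb{Z}^{d-1})$ into $\{x_d\in\{0,1\}\}$; a second adaptive round collapses the residue to exactly $\{e_d\}$, and one then iterates over the coordinates $d,d-1,\dots,k+1$. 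These directions still lie in $\mathcal{W}$ and leave $B$ pointwise fixed for the same reason as yours, so conditions (1) and (3) survive. This extremal choice is precisely the mechanism your fixed-direction scheme lacks, and it also removes your problematic boundary case $k=1$, where your admissible set of indices $j$ is empty. To repair the proof you would need to replace the directions $-e_i+e_j$ by these adaptively chosen ones (or supply a genuinely different argument for the terminal configuration); as written, condition (2) for $A'$ is not established.
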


\begin{proof}
We begin by translating $A$ and $B$ appropriately in order to ensure that $0 \in A \cap B$. Moreover, since $\vec{v} \in B$ and $\dim(B) = k$ and $\dim(A+B) = d$, there exists some non-singular linear transformation $\phi : \mathbb{Z}^d \to \mathbb{Z}^d$ and some natural number $m \in \mathbb{N}$ such that
\[ m \cdot e_1 = \phi(\vec{v}) \ \text{and} \ m \cdot \{0, e_1, \dots, e_k\} \subseteq \phi(B) \subseteq \mathbb{Z}^k \times \{0\}^{d-k} \ \text{and} \ m \cdot \{ 0,e_{k+1} , \dots, e_{d} \} \subseteq \phi(A). \]
Setting $A_1 = \mathcal{C}_{-e_d}(\dots ( \mathcal{C}_{-e_1}(A+ \vec{t})) \dots )$ and $B_1 =\mathcal{C}_{-e_d}(\dots ( \mathcal{C}_{-e_1}(B +\vec{t})) \dots )$, where $\vec{t} \in E^d$ satisfies the fact that $A+ \vec{t}, B+ \vec{t} \subseteq E^d$, we use Lemma $\ref{random}$ to deduce that
\[ \{0, e_1,\dots, e_k \} \subseteq B_1 \ \text{and} \ \{0, e_{k+1}, \dots, e_d \} \subseteq A_1. \]
\par

Consider $U_1 = A_1 \cap (\{0\} \times \mathbb{Z}^{d-2} \times \{0\})$ and let $\vec{w} \in U_1$ maximise $\n{e_{d} - \vec{w}}_{d}$, and so, we set $\vec{x} = e_d - \vec{w}$ and $A_2 = \mathcal{C}_{-e_{d}}(\dots  \mathcal{C}_{-e_{2}}(  \mathcal{C}_{-\vec{x}}( A_1 ) ) \dots )$ and $B_2 = \mathcal{C}_{-e_{d}}(\dots  \mathcal{C}_{-e_{2}}(  \mathcal{C}_{-\vec{x}}( B_1 ) ) \dots )$. Note that $B_2 = B_1$, while the set $A_2$ would satisfy 
\[ A_2 \cap (\{0\} \times \mathbb{Z}^{d-1}) \subseteq \{0\} \times \mathbb{Z}^{d-2} \times\{0,1\}, \]
since $A_1$ was a down set. We repeat this process again, by writing $U_2 = A_2 \cap (\{0\} \times \mathbb{Z}^{d-2} \times \{0\})$ and letting $\vec{w}' \in U_2$ maximise $\n{e_{d} - \vec{w}'}_{d}$ and compressing our sets in the directions of $-e_d + \vec{w}', -e_2, \dots, -e_d$ in that order. We would now obtain sets $A_3$ and $B_3$ such that $B_3 = B_2$ while the set $A_3$ would satisfy
\[ A_3 \cap (\{0\} \times \mathbb{Z}^{d-1}) \subseteq (\{0\} \times \mathbb{Z}^{d-2} \times\{0\}) \cup \{ e_d\}. \]
This would imply that $\pi_{1}(A_3) \setminus (\mathbb{Z}^{d-2} \times \{0\}) = \{ e_{d}\}$, while at the same time, we would also have $|\pi_1(A_3)| = r_1$ and $|\pi_1(B_3)| = r_2$. Now, we choose $\vec{w}'' \in (\mathbb{Z} \times \{0\}^{d-1}) \cap A_2$ such that $\n{\vec{w}''}_{d}$ is maximal, and we set $\vec{x}'' = e_{d} - \vec{w}''$ and $A_4 = \mathcal{C}_{-e_1}(\mathcal{C}_{-\vec{x}''}(A_3))$ and $B_4 = \mathcal{C}_{-e_1}(\mathcal{C}_{-\vec{x}''}(B_3))$. Note that $A_4$ and $B_4$ are down sets such that $B_4= B_1$ and $|\pi_1(A_4)|= r_1$ and $A_4 \setminus (\mathbb{Z}^{d-1} \times \{0\}) = \{e_{d}\}$. Iterating this argument $d-k-1$ times delivers the desired result.
\end{proof}

\section{Lower bounds for sumsets} 

For the sake of exposition, in this section we adopt the following notation. Given a finite, non-empty subset $A$ of $\mathbb{R}^d$ and a vector $\vec{x} \in \mathbb{R}^{d-1}$, we define
\[ \vec{x}^A = \{ \vec{a} \in A \ | \ \pi_d(\vec{a}) = \vec{x} \}. \]
We begin this section by proving a simple lower bound for $|A+B|$ when $B$ is contained in a $1$-dimensional affine subspace.

\begin{lemma} \label{d12}
Let $A,B$ be finite subsets of $\mathbb{R}^d$ and let $\vec{v} \in \mathbb{R}^d \setminus \{0\}$ satisfy $|\pi_{\vec{v}}(B)| =1$ and $|\pi_{\vec{v}}(A)| = r$. Then
\[ |A+ B| \geq |A| + r|B| - r. \]
\end{lemma}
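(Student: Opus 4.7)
The plan is to partition $A$ according to the fibers of $\pi_{\vec{v}}$ and then apply the one-dimensional sumset bound $\eqref{runit}$ to each fiber separately, leveraging the fact that $B$ lives on a single translate of $L_{\vec{v}}$.

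First I will translate $B$ so that $B \subseteq L_{\vec{v}}$; this is permitted since $|\pi_{\vec{v}}(B)| = 1$ and a translation of $B$ only translates $A+B$, preserving $|A+B|$. Next, write $\pi_{\vec{v}}(A) = \{\vec{x}_1, \dots, \vec{x}_r\}$ and decompose $A$ into its fibers as
\[ A = \bigsqcup_{i=1}^{r} \vec{x}_i^{A}. \]
Each fiber $\vec{x}_i^{A}$ is contained in the translated line $\vec{x}_i + L_{\vec{v}}$, and since $B \subseteq L_{\vec{v}}$, the set $\vec{x}_i^A + B$ is also contained in $\vec{x}_i + L_{\vec{v}}$.

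The key observation is that these translated lines are pairwise disjoint (as the $\vec{x}_i$ are distinct elements of $H_{\vec{v}}$), so the sumsets $\vec{x}_i^A + B$, for $1 \leq i \leq r$, are pairwise disjoint subsets of $A+B$. Consequently,
\[ |A+B| \geq \sum_{i=1}^{r} |\vec{x}_i^A + B|. \]
Within each line $\vec{x}_i + L_{\vec{v}}$, both $\vec{x}_i^A$ and $B$ are effectively one-dimensional sets, so the classical bound $\eqref{runit}$ yields $|\vec{x}_i^A + B| \geq |\vec{x}_i^A| + |B| - 1$. Summing over $i$ and using $\sum_i |\vec{x}_i^A| = |A|$ gives exactly
\[ |A+B| \geq \sum_{i=1}^{r} \bigl( |\vec{x}_i^A| + |B| - 1 \bigr) = |A| + r|B| - r, \]
as required.

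This argument is entirely elementary once the correct decomposition is in place; the only subtlety is being careful about the normalization of $\vec{v}$ so that $B \subseteq L_{\vec{v}}$ after translation, but no real obstacle arises.
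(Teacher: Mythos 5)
Your proof is correct and follows essentially the same route as the paper: decompose $A$ into the $r$ fibers of $\pi_{\vec{v}}$, observe that the sets $\vec{x}_i^A + B$ are pairwise disjoint because $B$ projects to a single point, and apply the bound $\eqref{runit}$ on each fiber. The translation normalizing $B \subseteq L_{\vec{v}}$ is a harmless extra step the paper omits.
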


\begin{proof}
We write $A = A_1 \cup \dots \cup A_{r}$ where $\pi_{\vec{v}}(A_i) \neq \pi_{\vec{v}}(A_j)$ for any $i \neq j$. Thus, we have
\[ |A+B| = \sum_{i=1}^{r}|A_i + B| \geq \sum_{i=1}^{r} (|A_i| + |B| -1 ) = |A| + r |B| - r. \qedhere \]
\end{proof}

With this lemma in hand, we now turn to our proof of Theorem $\ref{mn2}$ in the case when $r_2 \geq d$.

\begin{theorem} \label{rszgn}
Let $r_1, r_2, d \in \mathbb{N}$ satisfy $r_1,r_2 \geq d\geq 2$ and let $A,B$ be finite, non-empty subsets of $\mathbb{R}^d$ such that $|A| \geq |B|$ and $\dim(A+B) = d$. Moreover, let $\vec{v} \in \mathbb{R}^d \setminus \{0\}$ satisfy $\pi_{\vec{v}}(A) = r_1$ and $\pi_{\vec{v}}(B) = r_2$. Then 
\[ |A+B| \geq |A| + \bigg( d+1 - \frac{1}{r_1 - d + 2} - \frac{1}{r_2 - d + 2} \bigg)|B| - (d-1)(r_1 + r_2).  \]
\end{theorem}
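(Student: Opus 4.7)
The plan is to reduce to the case of integer down sets via the compression machinery of Section~5, and then induct on $d$. First I invoke Lemma~\ref{realin}: either the stronger conclusion \eqref{ur3} holds outright (finishing the proof), or I may take $A, B \subseteq \mathbb{Z}^d$ with $\vec{v} = e_d$ and appropriate axis-aligned simplices at the origin. Lemma~\ref{cmp1} then replaces $A, B$ by down sets whose $\pi_d$-projections lie in $(\mathbb{Z}^{d-2} \times \{0\}) \cup \{e_{d-1}\}$, falling into one of three configurations. For such down sets the $e_d$-fibers are intervals, so partial sumsets are computable: in particular, summing $X$ with a $1$-dimensional column of length $n$ in the $e_d$-direction produces exactly $|X| + (n-1)|\pi_d(X)|$ points.

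For the base case $d=2$, the target becomes $|A+B| \geq |A| + (3 - 1/r_1 - 1/r_2)|B| - (r_1 + r_2)$, and Lemma~\ref{gsne2} yields
\[ |A+B| \geq \bigg(\frac{|A|}{r_1} + \frac{|B|}{r_2} - 1\bigg)(r_1 + r_2 - 1). \]
After expansion and using $|A| \geq |B|$, the gap to the target collapses to $|B|(r_1/r_2 + r_2/r_1 - 2) + 1 \geq 0$, which is AM--GM. For the inductive step with $d \geq 3$, I decompose $A = A_0 \cup A_1$ and $B = B_0 \cup B_1$, where $A_0, B_0$ collect the $e_d$-fibers above points in $\mathbb{Z}^{d-2} \times \{0\}$ and $A_1, B_1$ are the (possibly empty) columns above $e_{d-1}$. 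Since the partial sumsets $A_i + B_j$ occupy pairwise distinct $e_{d-1}$-layers,
\[ |A+B| = |A_0 + B_0| + |(A_0 + B_1) \cup (A_1 + B_0)| + |A_1 + B_1|. \]
I bound $|A_0 + B_0|$ by the inductive hypothesis in dimension $d-1$ (with parameters $r_1', r_2'$ each decremented by $1$ precisely when the associated $A_1$ or $B_1$ is non-empty, and with slicing direction $e_d$), and handle the tail terms via the down-set formula above together with Lemma~\ref{d12}.

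The principal obstacle is the bookkeeping. The inductive hypothesis supplies coefficient $d - 1/(r_1' - d + 3) - 1/(r_2' - d + 3)$ on $|B_0|$, whereas the target requires $d + 1 - 1/(r_1 - d + 2) - 1/(r_2 - d + 2)$ on the full $|B|$, and the tail contributions must exactly bridge this gap while absorbing the correct $(d-1)(r_1 + r_2)$ error. I would analyze the three configurations of Lemma~\ref{cmp1} separately, swapping the roles of $A_0$ and $B_0$ in the inductive hypothesis whenever $|A_0| < |B_0|$; in each case the verification reduces to a tight arithmetic inequality of the same flavour as the AM--GM step used in the base case.
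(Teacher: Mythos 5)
Your proposal follows essentially the same route as the paper: reduction to integer down sets via Lemmas~\ref{realin} and~\ref{cmp1}, the $d=2$ base case from Lemma~\ref{gsne2} plus AM--GM, and an induction on $d$ that splits $A+B$ into $e_{d-1}$-layers, bounds the $\mathbb{Z}^{d-2}\times\{0\}$ layer by the inductive hypothesis (with $r_i$ decremented exactly when the corresponding column over $e_{d-1}$ is non-empty, and with roles swapped when the cardinality ordering flips), and handles the column layers via Lemma~\ref{d12}. The only detail left implicit in your bookkeeping is the corner $r_1=r_2=d$ of the configuration $U=V$, where the coefficient $r_1$ from Lemma~\ref{d12} is not large enough to close the induction and the paper instead falls back on Ruzsa's inequality \eqref{ruzsa4}.
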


\begin{proof}
Our base case is when $d=2$, which can be resolved by Lemma $\ref{gsne2}$. In order to see this, we apply Lemma $\ref{gsne2}$ in our setting to deduce that
\begin{align*}
 |A+B| & \geq  |A| (1 + (r_2-1)/r_1) + |B|(1 + (r_1 - 1)/r_2) - (r_1 + r_2) +1 \\
 & \geq |A| + |B|( 1 + r_2/r_1 + r_1/r_2 - 1/r_1 - 1/r_2 ) - (r_1 + r_2) \\
 & \geq |A| + |B|(3 - 1/r_1 - 1/r_2) - (r_1 + r_2),
 \end{align*}
where the last inequality follows from noting the fact that $r_1/r_2 + r_2/r_1 \geq 2$ for every $r_1, r_2 >0$. This is precisely the desired estimate, and so, from here on, we may assume that $d \geq 3$. 
\par

Applying Lemmas $\ref{realin}$ and $\ref{cmp1}$, we see that it suffices to reduce to the case when $A$ and $B$ are subsets of $\mathbb{Z}^d$ satisfying the properties that $A'$ and $B'$ satisfy in the conclusion of Lemma $\ref{cmp1}$. Writing $U = \pi_d(A)$ and $V = \pi_d(B)$, we first consider the case when $U \subseteq \mathbb{Z}^{d-2} \times \{0\}$ and $V \setminus (\mathbb{Z}^{d-2} \times \{0\}) = \{ e_{d-1}\}$. In this case, we set $V_1 = V \setminus \{e_{d-1}\}$. Note that $V_1^{B}$ and $U^{A}$ are finite subsets of $\mathbb{R}^{d-2} \times \{0\}\times \mathbb{R}$ such that $\dim(U^{A} + V_1^{B}) = d-1$ and $\pi_{d}(V_1^B) = r_2 - 1$. Since $|U^A| = |A| \geq |B| \geq |V_1^B|$, the inductive hypothesis implies that
\[ |U^A + V_1^B| \geq |U^A| + \bigg( d- \frac{1}{r_1 - d + 3} - \frac{1}{r_2 - d + 2} \bigg)|V_1^B| - (d-2)(r_1 + r_2 -1). \]
Moreover, by Lemma $\ref{d12}$, we have
\[ |U^A + e_{d-1}^B| \geq |U^A| + r_1 |e_{d-1}^B| - r_1 ,\]
and so, upon combining the above two bounds with the fact that
\[ |A+B| = |U^A + V_1^B| + |U^A + e_{d-1}^B|, \]
we get that
\[ |A+B| \geq 2|A| + \bigg( d - \frac{1}{r_1 - d + 3} - \frac{1}{r_2 - d + 2} \bigg)|V_1^B| + r_1 |e_{d-1}^B| - (d-1)(r_1 + r_2). \]
We may now use the facts that $|A| \geq |B| = |V_1^B| + |e_{d-1}^B|$ and that $r_1 \geq d$ to close the inductive loop. 
\par

The next case we consider is when $U \setminus (\mathbb{Z}^{d-2} \times \{0\}) = \{e_{d-1}\}$ while $V \subseteq \mathbb{Z}^{d-2} \times \{0\}$. Here, we write $U_1 = U \setminus \{ e_{d-1}\}$. We have to further divide into two cases depending on how large $|U_1^A|$ is compared to $|B|$. In particular, we first suppose that $|U_1^A| \geq |B|$, in which case, we may use the inductive hypothesis to deduce that
\[ |U_1^A + V^B| \geq |U_1^A| +  \bigg( d - \frac{1}{r_1 - d + 2} - \frac{1}{r_2 - d + 3} \bigg)|V^B| - (d-2)(r_1 + r_2-1).  \]
Combining this with the fact that 
\[ |e_{d-1}^A + V^B| \geq |e_{d-1}^A| + |V^B| - 1, \]
and that
\begin{equation} \label{rvn}
|A+ B| \geq |U_1^A + V^B| + |e_{d-1}^A + V^B| ,
\end{equation}
we are done.
\par

The more problematic subcase is when $|U_1^A| < |B|$, in which case, the inductive hypothesis implies that
\[ |U_1^A + V^B| \geq |V^B| +  \bigg( d - \frac{1}{r_1 - d + 2} - \frac{1}{r_2 - d + 3} \bigg)|U_1^A| - (d-2)(r_1 + r_2-1).  \]
But now, we may employ Lemma $\ref{d12}$ to obtain the bound
\[ |e_{d-1}^A + V^B| \geq r_2 |e_{d-1}^A| + |V^B| - r_2 \geq d |e_{d-1}^A| + |V^B| - r_2, \]
which, when amalgamated with the preceding inequality and $\eqref{rvn}$, gives us
\[ |A+B| \geq \bigg( d - \frac{1}{r_1 - d + 2} - \frac{1}{r_2 - d + 3} \bigg)|A|+ 2|B| - (d-1)(r_1 + r_2). \]
Note that since $r_1, r_2 \geq d \geq 2$ and $|A| \geq |B|$, we get
\[ \bigg( d - \frac{1}{r_1 - d + 2} - \frac{1}{r_2 - d + 3} \bigg)|A|+ 2|B| \geq |A| + \bigg( d+1 - \frac{1}{r_1 - d + 2} - \frac{1}{r_2 - d + 3} \bigg)|B| , \]
and so, we are done.
\par

Finally, the last case is when $U \setminus (\mathbb{Z}^{d-2} \times \{0\}) =  \{e_{d-1}\}$ and $U = V$. In this case, we write $U_1 = U \setminus \{e_{d-1}\}$ and $V_1 = V \setminus \{e_{d-1}\}$. Note that $U=V$ implies that $r_1 = |U| = |V| = r_2$. Moreover, the case when $r_1 = r_2 = d$ can be derived from Ruzsa's result, whence, we may assume that $r_1 = r_2 \geq d+1$. We first suppose that $|U_1^A| \geq |V_1^B|$, in which case, the inductive hypothesis suggests that
\[ |U_1^A + V_1^B| \geq |U_1^A| + \bigg( d - \frac{1}{r_1 - d + 2} - \frac{1}{r_2 - d + 2} \bigg)|V_1^B| - (d-2)(r_1 + r_2).\]
Moreover, by Lemma $\ref{d12}$, we have
\[ |U_1^A + e_{d-1}^B| \geq |U_1^A| + r_1 |e_{d-1}^B| - r_1 \geq |V_1^B| + (d+1)|e_{d-1}^B| - r_1, \]
and
\[ |e_{d-1}^A + e_{d-1}^B| \geq |e_{d-1}^A| + |e_{d-1}^B| - 1. \]
As before, we may then deduce that
\begin{align*}
 |A+B| 
 &  \geq |U_1^A + V_1^B| + |U_1^A + e_{d-1}^B| + |e_{d-1}^A + e_{d-1}^B| \\
 & \geq |A| + \bigg( d +1- \frac{1}{r_1 - d + 2} - \frac{1}{r_2 - d + 2} \bigg)|V_1^B|+ (d+1) |e_{d-1}^B| - (d-1)(r_1 + r_2), 
 \end{align*}
 in which case, we are done. 
 \par
 
We now consider the second subcase, that is, when $|U_1^A| < |V_1^B|$. In this case, the inductive hypothesis implies that
\[ |U_1^A + V_1^B| \geq  \bigg( d - \frac{1}{r_1 - d + 2} - \frac{1}{r_2 - d + 2} \bigg)|U_1^A| +|V_1^B| - (d-2)(r_1 + r_2).\]
Furthermore, by Lemma $\ref{d12}$, we have
\[ | e_{d-1}^A+ V_1^B|  \geq |V_1^B| + r_2 |e_{d-1}^A| - r_2 \geq |U_1^A| + (d+1) |e_{d-1}^A| - r_2. \]
Thus, we get
\begin{align*}
|A+B|
\geq & |U_1^A + V_1^B| + | e_{d-1}^A+ V_1^B| + |e_{d-1}^A + e_{d-1}^B| \\
\geq &  \bigg( d +1 - \frac{1}{r_1 - d + 2} - \frac{1}{r_2 - d + 2} \bigg)|A| + |B| - (d-1)(r_1 + r_2) \\
\geq &  |A| + \bigg( d +1 - \frac{1}{r_1 - d + 2} - \frac{1}{r_2 - d + 2} \bigg) |B| - (d-1)(r_1 + r_2),
\end{align*}
which finishes the proof of our theorem. 
\end{proof}

\section{Lower bounds for sumsets II}

We dedicate this section to proving Theorem $\ref{mn2}$ in the case when $r_2 < d$. Moreover, the subcase when $|B| = 1$ follows trivially, and so, we may assume that $|B| \geq 2$.

\begin{theorem} \label{rszgn2}
Let $r_1, r_2, d, k\in \mathbb{N}$ satisfy $r_1 \geq d> r_2, k \geq 1$. Let $A,B$ be finite, non-empty subsets of $\mathbb{R}^d$ such that $|A| \geq |B| \geq 2$ and $\dim(A+B) = d$ and $\dim(B) =k$, and let $\vec{v} \in \mathbb{R}^d \setminus \{0\}$ satisfy $|\pi_{\vec{v}}(A)| = r_1$ and $|\pi_{\vec{v}}(B)| = r_2$. Then, we have
\[ |A+B| \geq |A| + \bigg( d+1 - \frac{1}{r_1 - d + 2} - \frac{1}{r_2 - k + 2} \bigg)|B| - (d-1)(r_1 + r_2) . \]
\end{theorem}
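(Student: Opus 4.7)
The plan is to follow the structure of the proof of Theorem \ref{rszgn}, but to replace the reduction supplied by Lemma \ref{cmp1} with the one supplied by Lemma \ref{tmb}, which is tailored precisely to the case $r_2 < d$. Before carrying out the reduction I dispose of two easy cases. Since $\dim(B)=k$ implies $|\pi_{\vec{v}}(B)| \geq k$ for every nonzero $\vec{v}$, the hypothesis $r_2 = 1$ forces $k=1$; in this setting Lemma \ref{d12} yields $|A+B| \geq |A| + r_1|B| - r_1$, and a direct calculation using $r_1 \geq d$ and $|B| \geq 2$ verifies this implies the desired bound. Alternatively, if $r_2 = |B|$ (so $\pi_{\vec{v}}$ is injective on $B$), then the argument from the first branch of the proof of Lemma \ref{realin} shows that Ruzsa's estimate \eqref{ruzsa4} already yields the stronger inequality \eqref{ur3}, which suffices since the coefficient $d+1-\tfrac{1}{r_1-d+2}-\tfrac{1}{r_2-k+2}$ we seek is at most $d+1$.

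In the remaining range $2 \leq r_2 \leq |B|-1$ (which, combined with $|\pi_{\vec{v}}(B)| \geq k$, forces $k \geq 2$), applying Lemmas \ref{realin} and \ref{tmb} in succession reduces us to the setup in which $A, B \subseteq \mathbb{Z}^d$ are down sets satisfying $B \subseteq \mathbb{Z}^k \times \{0\}^{d-k}$ with $\{e_1, \dots, e_k\} \subseteq B$, while $A \setminus (\mathbb{Z}^k \times \{0\}^{d-k}) = \{e_{k+1}, \dots, e_d\}$, still with $|\pi_1(A)| = r_1$, $|\pi_1(B)| = r_2$, and $\dim(A+B)=d$. Writing $A_0 = A \cap (\mathbb{Z}^k \times \{0\}^{d-k})$, the $d-k+1$ sets $A_0+B,\, e_{k+1}+B,\, \dots,\, e_d+B$ are pairwise disjoint, because $A_0+B$ has all top $d-k$ coordinates equal to $0$, while each $e_i+B$ is distinguished by having its $i$-th coordinate equal to $1$ and the remaining top coordinates equal to $0$. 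Hence
\[ |A+B| = |A_0+B| + (d-k)|B|. \]
Viewing $A_0, B$ as subsets of $\mathbb{Z}^k$, one checks that $\dim(A_0+B) = k$, $|\pi_1(A_0)| = r_1 - (d-k) \geq k$, and $|\pi_1(B)| = r_2 \geq k$, so Theorem \ref{rszgn} (already established) applies in dimension $k \geq 2$. When $|A_0| \geq |B|$ this yields
\[ |A_0+B| \geq |A_0| + \Bigl(k+1 - \tfrac{1}{r_1-d+2} - \tfrac{1}{r_2-k+2}\Bigr)|B| - (k-1)(r_1-d+k+r_2); \]
substituting $|A_0| = |A|-(d-k)$ into the splitting identity and rearranging delivers the desired inequality, the surplus being the manifestly non-negative quantity $(d-k)(r_1+r_2+k-2)$. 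When $|A_0| < |B|$, I apply Theorem \ref{rszgn} in the reversed order (with $B$ playing the role of the larger set) before proceeding analogously.

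The main obstacle is the arithmetic in the sub-case $|A_0| < |B|$. Swapping the roles of $A_0$ and $B$ in Theorem \ref{rszgn} places the improved coefficient $k+1-\tfrac{1}{r_1-d+2}-\tfrac{1}{r_2-k+2}$ on $|A_0|$ rather than on $|B|$, and one has to show that the $(d-k)|B|$ contribution from the disjoint translates absorbs this loss. This is feasible because the relation $|A_0| = |A|-(d-k)$ together with $|A| \geq |B|$ forces $|B| - |A_0| \leq d-k$, and a short manipulation of the two fractions $\tfrac{1}{r_1-d+2}$ and $\tfrac{1}{r_2-k+2}$ shows that the deficit $(\beta-1)(|B|-|A_0|)$ incurred by swapping (where $\beta = k+1 - \tfrac{1}{r_1-d+2} - \tfrac{1}{r_2-k+2}$) is comfortably controlled by the $(d-k)(r_1+r_2)$ slack. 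This is the direct analogue of the ``$|U_1^A| < |V_1^B|$'' sub-cases appearing in the proof of Theorem \ref{rszgn}, and I expect it to be handled by a routine, if slightly tedious, computation.
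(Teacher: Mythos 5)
This is essentially the paper's own proof: the same preliminary reductions (Lemma \ref{d12} when $r_2=1$, Ruzsa's bound when $r_2=|B|$, then Lemmas \ref{realin} and \ref{tmb}), the same splitting $|A+B|=|A_0+B|+(d-k)|B|$, and the same application of Theorem \ref{rszgn} in dimension $k$ with the coefficient swap controlled by $|B|-|A_0|\le d-k$; your surplus computation $(d-k)(r_1+r_2+k-2)$ checks out and comfortably absorbs the at most $k(d-k)$ loss from the swap. One justification is misstated: $k\ge 2$ in the main case does not follow from $r_2\ge k$ (that inequality bounds $k$ from above, not below); rather, $r_2\le|B|-1$ lets you normalise so that $0,\vec{v}\in B$, whence $k=1$ would force $B\subseteq L_{\vec{v}}$ and hence $r_2=1$, contradicting $r_2\ge 2$.
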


\begin{proof}
We may assume that $r_2 \leq |B| -1$ since otherwise, we can employ $\eqref{runit}$ to deduce our result. The fact that $r_2 \leq |B|-1$ would further imply that there exist $\vec{b}_1, \vec{b}_2 \in B$ such that $\vec{b}_1 - \vec{b}_2  = \lambda \cdot \vec{v}$ for some $\lambda \neq 0$. We now translate $B$ appropriately to ensure that $\vec{b}_2 = 0$, and moreover, since our hypothesis remains invariant under dilations of $\vec{v}$, we may choose $\vec{v}$ to lie in the set $B$.  Next, we note that when $d = 2$, we must have $r_2 = 1$, in which case, we can apply Lemma $\ref{d12}$ to procure the desired bound, and so, we may suppose that $d \geq 3$. 
 \par
 
Note that when $k=1$, the fact that $0, \vec{v} \in B$ would imply that $r_2 = 1$, in which case, Lemma $\ref{d12}$ yields the bound
\[ |A+B|  \geq |A| + r_1 |B| - r_1. \]
This delivers the required result, and so, it suffices to assume that $k \geq 2$. We also have that $k \leq r_2$, since whenever $r_2 \geq 2$, then $r_2$ parallel lines are contained in an affine subspace of dimension at most $r_2$. Moreover, we may apply Lemmas $\ref{ur3}$ and $\ref{tmb}$ to ensure that the sets $A$ and $B$ satisfy the properties that $A'$ and $B'$ satisfy in the conclusion of Lemma $\ref{tmb}$. 
\par

Writing $X = A \cap (\mathbb{Z}^k \times \{0\}^{d-k})$, we see that
\[ |A+B| = |X+B| + \sum_{i=k+1}^{d} |e_{i} + B| \geq |X+B| + (d-k)|B|. \]
Moreover, we see that $|\pi_{1}(X)| = r_1 - d +k \geq k$, and so, we are now ready to apply Theorem $\ref{rszgn}$. In particular, we claim that the latter gives us
\[ |X+B| \geq  |X| + \bigg( k+1 - \frac{1}{r_1 - d + 2} - \frac{1}{r_2 - k + 2} \bigg)|B| - k(r_1 +r_2 - d +k ) -k(d-k).  \]
In order to see this, note that if $|X| \geq |B|$, then the conclusion of Theorem $\ref{rszgn}$ yields an even stronger bound than the above estimate. On the other hand, if $|B| \geq |X|$, then upon observing that $|B| - |X| = |B| - |A| + (d-k) \leq d-k$, we see that we are able to interchange the coefficients of $|B|$ and $|X|$ in  the conclusion of Theorem $\ref{rszgn}$ by incurring a further loss of the amount $k(d-k)$, which, in turn, delivers the desired claim. Finally, amalgamating the claimed estimate with the preceding discussion, we get that
\[ |A+B| \geq |A| + \bigg( d+1 - \frac{1}{r_1 - d + 2} - \frac{1}{r_2 - k + 2} \bigg)|B| - (d-1)(r_1 + r_2) , \]
and so, we are done.
\end{proof}


%


\section*{Acknowledgments} 
The author is grateful to Misha Rudnev for many helpful discussions, and to the anonymous referee for many helpful comments. Finally, as the author was finishing the paper, it came to his attention that, using related methods, Conlon and Lim \cite{CL2021} have independently obtained an estimate akin to Theorem $\ref{diff}$. They are able to obtain a sharp $O_{d}(1)$ error term, instead of our $O_{d}(|A|^{1- \delta})$ term, for all sufficiently large subsets $A$ of $\mathbb{R}^d$.

\bibliographystyle{amsplain}


\begin{dajauthors}
\begin{authorinfo}[am]
  Akshat Mudgal\\

Mathematical Institute, \\
University of Oxford, \\
Oxford OX2 6GG, UK\\
Akshat.Mudgal\imageat{}maths\imagedot{}ox\imagedot{}ac\imagedot{}uk \\
  \url{https://sites.google.com/view/akshatmudgal}
\end{authorinfo}
\end{dajauthors}

\end{document}